\newcommand{\occult}[1]{}
\newcommand{\eqdef}{\stackrel{\scriptscriptstyle\rm def}{=}}
\newcommand{\dist}{{\rm dist}}
\def\cC{{\mathcal C}}
\newtheorem{thm}{Theorem}[section]
\newtheorem{cor}[thm]{Corollary}
\newtheorem{lem}[thm]{Lemma}
\newtheorem{sch}[thm]{Scholium}
\newtheorem{prop}[thm]{Proposition}
\newtheorem{cla}[thm]{Claim}
\newtheorem{rem}[thm]{Remark}
\newtheorem{que}[thm]{Question}
\title[Entropy-expansivity and partial hyperbolicity]{Entropy-expansiveness for partially hyperbolic diffeomorphisms.}
\author{L. J. D\'{\i}az, T. Fisher,  M. J. Pacifico, and J. L. Vieitez
 }
\address{L.~J.~D\'{i}az, Depto. Matem\'{a}tica, PUC-Rio, Marqu\^{e}s de S. Vicente 225,
22453-900 Rio de Janeiro RJ, Brazil}
\email{lodiaz@mat.puc-rio.br}
\address{T.~Fisher, Department of Mathematics, Brigham Young University, Provo, UT 84602}
\email{tfisher@math.byu.edu}
\address{M.~J.~Pacifico, Instituto de Matematica, Universidade Federal do Rio de Janeiro, C. P. 68.530, CEP 21.945-970, Rio de Janeiro, R. J. , Brazil}
\email{pacifico@im.ufrj.br}
\address{J.~L.~Vieitez, Instituto de Matematica, Facultad de Ingenieria, Universidad de la Republica, CC30, CP 11300, Montevideo, Uruguay}
\email{ jvieitez226@gmail.com}
\thanks{This paper was partially supported
CNPq, Faperj, and PRONEX-Dynamical Systems (Brazil). The authors
thank IM-UFRJ and PUC-Rio for the support and hospitality during their
visits while preparing this paper.}
\subjclass[2000]{37D30, 37C05, 37B10}
\date{May 5, 2009}
\keywords{Entropy-expansive, equilibrium state, partially
hyperbolic, dominated splitting, symbolic extension}
\date{\today}
\begin{document}

\begin{abstract}
We show that diffeomorphisms with a dominated splitting of
the form $E^s\oplus E^c\oplus E^u$, where $E^c$ is a nonhyperbolic
central bundle that splits in a dominated way into
1-dimensional subbundles, are entropy-expansive.
In particular, they have a principal symbolic extension and equilibrium states.

\end{abstract}

\maketitle

\section{Introduction}

In dynamical systems one often considers the following
three main levels of structure: measure theoretic, topological,
and infinitesimal (properties of the derivative). Connections
between such different levels have always been of high interest.
For example,  uniform hyperbolicity, an infinitesimal property, 
implies a rich structure on the other two levels. This paper is
part of a program that studies how more general infinitesimal
properties (partial hyperbolicity and existence
of dominated splittings)
 force a certain
topological and measure-theoretic behavior for the underlying
dynamics. Here we will focus on a special type of partial
hyperbolicity that will ensure the system is entropy-expansive.

A  diffeomorphism $f$ is $\alpha$-expansive, $\alpha>0$, if
$\dist(f^n(x),f^n(y))\leq \alpha$ for all $n \in \mathbb{Z}$ implies $x=y$.
Uniform hyperbolicity implies $\alpha$-expansiveness
for some $\alpha>0$. 
One can relax this condition requiring
entropy-expansiveness.
This notion,  introduced by Bowen \cite{Bo},  is characterized by the fact
that, for every small $\alpha>0$ and every point $x\in M$, the intersection of the sets 
$f^{-n}(B(f^n(x),\alpha))$, $n\in \mathbb{Z}$, has zero  topological entropy.
Here $B(x,\alpha)$ is the ball centered at $x$
of radius $\alpha$.

Entropy-expansive maps are not
necessarily expansive, but have similar properties to expansive
maps
 in regards to topological and measure theoretic entropy.
For instance, entropy-expansive maps always have equilibrium
states, \cite{Ke}, and symbolic extensions preserving the entropy
structure (called principal extensions), \cite{BFF}.
%Systems with symbolic extensions have a ``well-behaved" entropy structure~\cite{BD}.
%Moreover, considering the weaker version of asymptotically
%$h$-expansive introduced by Misiurewicz in \cite{Mi}, there is the
%following string of implications relating these notions:
%entropy-expansive $\Rightarrow$ asymptotically $h$-expansive
%$\Rightarrow$ existence of symbolic extensions, see \cite{BFF}.
For a broad discussion between these notions see \cite[Section
1]{DN}.

Several results  illustrate the interplay between smoothness and
en\-tro\-py-expansive-like properties. First, it follows
from~\cite{Bu} and~\cite{BFF} that $C^{\infty}$ diffeomorphisms
are asymptotically $h$-expansive. In \cite{DM}  it was shown that
every $C^2$ interval map has a symbolic extension. A similar
result for $C^2$ surface diffeomorphisms can be found in
\cite{Bur}. These results support the conjecture of Downarowicz
and Newhouse~\cite{DN} that every $C^2$ diffeomorphism has a
symbolic extension. However, this conjecture does not hold for
$C^1$ diffeomorphisms in any manifold of dimension three or
higher, \cite{As,DiFi}.

In this paper we adopt a different approach and study the relation
between ``hyperbolic-like properties'' and entropy-expansiveness.
Indeed uniformly hyperbolic
diffeomorphisms are entropy-expansive. There are
also some results available  for ``weakly hyperbolic'' systems.
For instance, in~\cite{PaVi} for a surface diffeomorphisms $f$ and
a compact $f$-invariant set $\Lambda$ with a dominated splitting
it is shown that the map $f$ restricted to $\Lambda$ is
entropy-expansive. See also further related results in
\cite{PaVi2}. Finally, in \cite{CY} it is shown that every
partially hyperbolic set with a one-dimensional center direction
is entropy-expansive.

Here we continue with the above investigations and
consider partially hyperbolic sets whose center bundle is higher
dimensional, but splits in a dominated way into one-dimensional
subbundles. We prove that such diffeomorphisms are
entropy-expansive:

\begin{thm}\label{t.mainresultado}
Let $f$ be a diffeomorphism
 and $\Lambda$ be a compact $f$-invariant set admitting a dominated
splitting $E^s\oplus E_1\oplus\cdots \oplus E_k\oplus E^u$,
where $E^s$ is uniformly
contracting, $E^u$ is uniformly expanding, and all $E_i$ are one-dimensional.
Then $f|_{\Lambda}$ is entropy-expansive.
\end{thm}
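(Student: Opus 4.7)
The proof naturally decomposes into a geometric step that reduces dynamical balls to low-dimensional center disks, followed by an entropy step that shows these disks carry no entropy. We proceed by induction on the dimension $k$ of the center. The base cases $k=0$ (uniform hyperbolicity, hence expansiveness) and $k=1$ (the result of \cite{CY}) are already known, so the work is in the inductive step $k\ge 2$.

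\emph{Geometric step.} The domination provides invariant cone fields around each partial sum $E^s\oplus E_1\oplus\cdots\oplus E_i$ and $E_{i+1}\oplus\cdots\oplus E_k\oplus E^u$; the Hirsch--Pugh--Shub plaque family theorem then yields locally $f$-invariant plaque families $\cW^{cs,i}$ and $\cW^{cu,i}$ tangent to these sub-bundles for every $i=0,\ldots,k$. For $\alpha>0$ sufficiently small, a standard argument exploiting the uniform contraction of $E^s$ and the uniform expansion of $E^u$ shows that
$$\Gamma_\alpha(x):=\bigcap_{n\in\ZZ} f^{-n}\bigl(B(f^n(x),\alpha)\bigr)\;\subset\; \cW^{cs,k}_{\mathrm{loc}}(x)\cap \cW^{cu,0}_{\mathrm{loc}}(x)\;=:\;D^c(x),$$
a $k$-dimensional embedded disk through $x$ tangent to the center bundle $E^c=E_1\oplus\cdots\oplus E_k$: a point of $\Gamma_\alpha(x)$ having a nonzero $E^u$-component would, by the expansion rate of $E^u$ dominating the growth of the other bundles, escape any $\alpha$-neighborhood in forward time, and symmetrically for $E^s$ in backward time.

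\emph{Entropy step.} We must now show $h(f,\Gamma_\alpha(x))=0$. Inside $D^c(x)$ we foliate by the sub-plaque family tangent to $E_1\oplus\cdots\oplus E_{k-1}$, whose transversals are one-dimensional curves tangent to $E_k$. A Bowen-type decomposition bounds the growth rate of $(n,\varepsilon)$-separated subsets of $\Gamma_\alpha(x)$ by the sum of the growth rates of $(n,\varepsilon)$-separated subsets along the $(k-1)$-dimensional sub-plaques and along the $E_k$-transversals. The first contribution is sub-exponential by the inductive hypothesis, applied to the induced structure on the sub-plaques tangent to $E_1\oplus\cdots\oplus E_{k-1}$ (viewed with $E^s$ playing its role and the expanding ``upper part'' $E_k\oplus E^u$ replacing $E^u$). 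The second contribution is the one-dimensional dynamics along $E_k$, handled by an adaptation of the Cao--Yang argument to the dominated splitting $\bigl(E^s\oplus E_1\oplus\cdots\oplus E_{k-1}\bigr)\oplus E_k\oplus E^u$, in which $E_k$ is the one-dimensional ``center''.

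\emph{Main obstacle.} The principal technical difficulty is that neither $D^c(x)$ nor its inner sub-plaques are $f$-invariant sets: $f$ transports them along the orbit via plaque holonomy, and the intermediate bundles $E^s\oplus E_1\oplus\cdots\oplus E_i$ with $i<k$ are not uniformly contracting but only dominated by their complements. Consequently, the inductive hypothesis cannot be applied as a black box; one must revisit the arguments in the non-autonomous plaque-family setting and control the distortion of the plaque holonomies uniformly in $n$ using the domination constants. A related subtlety is verifying that the sub-plaques tangent to $E_1\oplus\cdots\oplus E_{k-1}$ behave as ``quasi-stable'' leaves within $D^c(x)$ even though the tangent bundle is merely dominated, not contracting. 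It is here that the one-dimensionality of each $E_i$ together with the iterated domination must be used essentially, typically through bounded-distortion estimates along each 1-dimensional strand of the flag.
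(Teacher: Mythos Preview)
Your proposal identifies the right geometric starting point (confining $\Gamma_\alpha(x)$ to a $k$-dimensional center disk via the uniform hyperbolicity of $E^s$ and $E^u$), but the inductive entropy step has a genuine gap which you yourself flag and do not close. The inductive hypothesis is a statement about $f$ restricted to a compact $f$-invariant set carrying a dominated splitting with fewer central bundles. The sub-plaques tangent to $E_1\oplus\cdots\oplus E_{k-1}$ are none of these things: they are not $f$-invariant, and the restriction of $f$ to them is not even a self-map. There is therefore no object to which the theorem for $k-1$ can be applied, even ``after revisiting the arguments''. Likewise, the Bowen-type product estimate you invoke requires the foliation to be respected by the dynamics in a way that the merely locally-invariant fake plaque families are not; without genuine invariance you cannot separate the $(n,\varepsilon)$-growth into a leafwise part and a transverse part uniformly in $n$. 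Your ``Main obstacle'' paragraph is an accurate description of why the scheme does not go through as written.

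The paper bypasses this obstacle entirely by proving something much stronger than your geometric step: $\Gamma_\delta(x)$ is not merely contained in a $k$-dimensional center disk, it is contained in a \emph{single one-dimensional} central curve $\gamma_i(x)$ tangent to some $E_i$ (Proposition~\ref{p.bowenballs}). The mechanism is a Pliss-lemma/hyperbolic-times argument (Lemma~\ref{l.contraction}): if some nontrivial arc $[x,y]_{\gamma_i(x)}$ lies in $\Gamma_\delta(x)$, then the fact that its length stays bounded under all iterates forces, via Pliss, the derivative along $E^{cs,i-1}$ and along $E^{cu,i+1}$ to be genuinely contracting/expanding at every point of the arc. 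Hence the fake leaves $\widehat W^{cs,i-1}$ and $\widehat W^{cu,i+1}$ through points of that arc behave like true stable/unstable leaves and meet $\Gamma_\delta(x)$ in single points (Corollary~\ref{c.todd}). A finite projection argument through the fake foliations then rules out any point of $\Gamma_\delta(x)$ off the curve. Once $\Gamma_\delta(x)$ is a bounded one-dimensional arc whose iterates have uniformly bounded length, zero entropy is immediate. No induction on $k$ is used, and the non-invariance of the plaques is irrelevant because nothing is ever restricted to them.
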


In the previous theorem we allow for the bundles 
$E^s$ and $E^u$ to possibly be empty.

Observe that
 the conditions in the theorem are also
necessary for entropy-expansiveness
at least for $C^1$ generic diffeomorphisms.
More precisely, in contrast with our result, $C^1$
generically diffeomorphisms having a  central (non-hyperbolic)
indecomposable bundle of dimension at least two are not entropy expansive,
 \cite{DiFi,As}.  In fact, the
proof of these results  follows the methods introduced
in~\cite{DN} relating  homoclinic tangencies
to non-existence  of symbolic extensions.
Roughly, the existence of
an indecomposable central
of dimension two (or higher) leads to the appearance of 
persistent homoclinic
tangencies which in turns prevent entropy-expansiveness.
We observe that  the
hypotheses of Theorem~\ref{t.mainresultado} prevents the creation of
homoclinic tangencies by perturbations, see for instance \cite{W}.

\medskip

Next we derive some consequences of Theorem~\ref{t.mainresultado}.
In~\cite{BFF} it is shown that every entropy-expansive diffeomorphism has a principal symbolic extension.
 We then have the next corollary.

\begin{cor}\label{l.pse}
If $\Lambda$ and $f$ are as in Theorem~\ref{t.mainresultado}, then  $f|_{\Lambda}$ has a principal symbolic extension
\end{cor}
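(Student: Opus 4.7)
The plan is to derive the corollary directly from Theorem~\ref{t.mainresultado} together with the result from~\cite{BFF} already quoted in the introduction. Since the hypotheses on $\Lambda$ and $f$ are exactly those of the theorem, a first and essentially cost-free step is to invoke Theorem~\ref{t.mainresultado} to conclude that the restriction $f|_{\Lambda}$ is entropy-expansive.

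Once entropy-expansiveness is in hand, I would quote the theorem of Boyle, Fiebig and Fiebig in~\cite{BFF}, which asserts that any homeomorphism of a compact metric space that is entropy-expansive admits a principal symbolic extension; that is, a symbolic system factoring onto it in a way that preserves the entropy structure. Applying this result to the compact invariant set $\Lambda$, regarded as a compact metric space on which $f$ acts as a homeomorphism, yields the desired principal symbolic extension of $f|_{\Lambda}$.

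There is no real obstacle here: the argument is a two-line concatenation of Theorem~\ref{t.mainresultado} with the black-box result from~\cite{BFF}. The only thing to be slightly careful about is that the statement in~\cite{BFF} is formulated for homeomorphisms of compact metric spaces rather than for diffeomorphisms of manifolds, so I would briefly remark that $\Lambda$ is compact and $f$-invariant and hence $f|_{\Lambda}$ is a homeomorphism of a compact metric space, placing us squarely in the framework of that reference.
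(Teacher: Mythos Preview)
Your proposal is correct and matches the paper's approach exactly: the corollary is stated immediately after the sentence ``In~\cite{BFF} it is shown that every entropy-expansive diffeomorphism has a principal symbolic extension,'' and no further proof is given. The two-line concatenation of Theorem~\ref{t.mainresultado} with the result of Boyle--Fiebig--Fiebig is precisely what the authors intend.
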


Since every entropy-expansive diffeomorphism has an equilibrium state we have the next result.

\begin{cor}\label{l.eq}
For $\Lambda$ and $f$ as in Theorem~\ref{t.mainresultado}, if $\varphi\in C^0(\Lambda)$, then $f|_{\Lambda}$ has an
equilibrium state associated with $\varphi$.
\end{cor}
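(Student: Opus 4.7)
The plan is to combine Theorem~\ref{t.mainresultado} with the standard upper semi-continuity argument for existence of equilibrium states; there is essentially no hidden difficulty here, only a chain of citations to assemble.

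First, apply Theorem~\ref{t.mainresultado} to conclude that $f|_{\Lambda}$ is entropy-expansive. Second, invoke the classical fact (going back to Bowen~\cite{Bo}, and used in the form of~\cite{Ke} cited in the introduction) that entropy-expansiveness of $f|_{\Lambda}$ implies that the metric-entropy map
\[
\mu \;\longmapsto\; h_{\mu}(f|_{\Lambda})
\]
is upper semi-continuous on the space $\cM(f|_{\Lambda})$ of $f|_{\Lambda}$-invariant Borel probability measures, endowed with the weak-$*$ topology. Third, because $\varphi \in C^{0}(\Lambda)$ and $\Lambda$ is compact, the functional
\[
\mu \;\longmapsto\; \int_{\Lambda} \varphi\,d\mu
\]
is continuous in the weak-$*$ topology on $\cM(f|_{\Lambda})$.

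Fourth, since the sum of an upper semi-continuous and a continuous function is upper semi-continuous, the free-energy functional
\[
\mu \;\longmapsto\; h_{\mu}(f|_{\Lambda}) + \int_{\Lambda} \varphi\,d\mu
\]
is upper semi-continuous on the compact set $\cM(f|_{\Lambda})$. Hence it attains its supremum; any maximizer is by definition an equilibrium state for $\varphi$ with respect to $f|_{\Lambda}$, which is what we needed to prove. The only conceivable obstacle would be verifying the upper semi-continuity of the entropy map, but this is precisely the content of the entropy-expansive property supplied by Theorem~\ref{t.mainresultado}, so no further work is needed.
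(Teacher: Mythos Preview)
Your argument is correct and is exactly the standard reasoning behind the paper's one-line justification ``Since every entropy-expansive diffeomorphism has an equilibrium state we have the next result,'' citing~\cite{Ke}. The paper does not provide any further proof for this corollary; you have simply unpacked the cited fact (entropy-expansiveness $\Rightarrow$ upper semi-continuity of $\mu\mapsto h_\mu(f)$ $\Rightarrow$ existence of a maximizer of the free energy on the compact set $\cM(f|_\Lambda)$), so there is nothing to add or correct.
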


As domination is a key ingredient in our constructions we have the next natural question.

\begin{que}
Let $f$ be a diffeomorphism and $\Lambda$ be a compact $f$-invariant set with a
$Df$-invariant splitting (not necessarily dominated)
 $T_\Lambda M=E^s\oplus E_1\oplus\cdots \oplus E_k\oplus E^u$,
 with $E^s$ uniformly contracting, $E^u$ uniformly expanding, and  $E_1, \dots, E_k$ one-dimensional. Is $f$ entropy-expansive?
\end{que}

We note that as we were preparing this paper Liao, Viana, and Yang~\cite{GVY} announced that diffeomorphisms far from
homoclinic tangencies satisfy Shub's Entropy Conjecture, see~\cite{S}, and have a principal symbolic extension.
This conjecture relates the topological entropy to the spectral radius of the action induced by the system on
the homology (see previous partial results in
\cite{SX}).

\medskip

This paper is organized as follows.  In Section~\ref{s.background} we provide background, 
including the existence of fake foliations.  
In Section~\ref{s.proof} we prove Theorem~\ref{t.mainresultado}.

\section{Definitions and background}\label{s.background}
We now recall the main concepts in
this paper; namely, the notions of entropy-expansiveness and dominated splittings.

\subsection{Entropy and symbolic extensions}
In what follows
$(X,d)$ is a compact metric space and $f$ is a continuous self-map of $X$.
The $d_n$ metric on $X$ is defined as
$$d_n(x,y):=\max_{0\leq i\leq n-1}\dist (f^i(x), f^i(y))$$
and is equivalent to $d$ and defined for all $n\geq 0$.

For a set
$Y\subset X$, a set $A\subset Y$ is {\it $(n,\epsilon)$-spanning}
if for any $y\in Y$ there exists a point $x\in A$ where
$d_n(x,y)<\epsilon$. The minimum cardinality of the
$(n,\epsilon)$-spanning sets of $Y$ is denoted $r_n(Y,\epsilon)$.
We let
\begin{equation}\label{e.entropy}
\bar{r}(Y,\epsilon):=\limsup_{n\rightarrow\infty}\frac{1}{n}\log r_n(Y,\epsilon) \quad \textrm{and}
\quad \tilde{h}(f,Y):=\lim_{\epsilon\rightarrow 0}\bar{r}(Y,\epsilon).
\end{equation}
To see that the last limit exists see for instance~\cite{Mi}.  The {\emph{topological entropy}}
$h_{\mathrm{top}}(f)$
of $f$ is  $\tilde{h}(f, X)$.

Given $\epsilon>0$ let
$$
\Gamma^+_{\epsilon}(x) :=\bigcap_{n=0}^{\infty}f^{-n}(B_{\epsilon}(f^n(x)))
$$
and set
$$
h^*_f(\epsilon):=\sup_{x\in X}\tilde{h}(f,\Gamma^+_{\epsilon}(x)).
$$
The map $f$ is {\emph{entropy-expansive,}} or {\emph{$h$-expansive}} for short, if
there exists some $c>0$ such that $h^*_f(\epsilon)=0$ for all $\epsilon\in (0,c)$.

If $f$ is a homeomorphism, then we  define
$$
\Gamma_{\epsilon}(x) \colon
=\bigcap_{n\in\mathbb{Z}}f^{-n}(B_{\epsilon}(f^n(x)))\textrm{ and
} h^*_{f, \mathrm{homeo}}(\epsilon)\colon =\sup_{x\in
X}\tilde{h}(f,\Gamma_{\epsilon}(x)).
$$
If $X$ is a compact space
and $f$ is a homeomorphism, then $h^*_{f}(\epsilon)=h^*_{f,
\mathrm{homeo}}(\epsilon)$, \cite{Bo}.

For an $f$-invariant measure $\mu$ the {\it measure theoretic entropy of $f$}  measures
the exponential growth of orbits under $f$ that are ``relevant" to $\mu$ and is denoted $h_{\mu}(f)$,
see for instance~\cite{KH} for a precise definition.  The {\it variational principle} states that if
$X$ is a compact metric space and $f$ is continuous, then $h_{\mathrm{top}}(f)=\sup_{\mu\in \mathcal{M}(f)}h_{\mu}(f)$,
where $\mathcal{M}(f)$ is the space of all invariant Borel probability measures for $f$.

If $f$ is a
homeomorphism and $\varphi\in C^0(X)$,
then the {\it pressure of $f$ with respect to $\varphi$ and
$\mu\in \mathcal{M}(f)$} is
$$P_{\mu}(\varphi, f):=h_{\mu}(f) +\int \varphi d\mu.$$  The topological pressure of  $(X,f)$, denoted
$P(\varphi, f)$,  corresponds to a ``weighted" topological entropy,
see~\cite[p.~623]{KH}.    The {\emph{variational principle for pressure}}
states that if  $f$ is a
homeomorphism of $X$ and $\varphi\in C^0(X)$ then
$$P(\varphi, f):=\sup_{\mu\in\mathcal{M}(f)}P_{\mu}(\varphi, f).$$
A measure $\mu$ such that $P(\varphi, f)=P_{\mu}(\varphi, f)$
is called an {\it equilibrium state}.

A dynamical system $(X,f)$ has a {\it symbolic extension} if there
exists a subshift $(Y,\sigma)$ and a continuous surjective map
$\pi:Y\rightarrow X$ such that $\pi \circ\sigma=f\circ\pi$. The
system $(Y,\sigma)$ is called an {\it extension} of $(X,f)$ and
$(X,f)$ is called a {\it factor} of $(Y,\sigma)$. Note that the subshift need not be
of finite type and the factor map may be infinite-to-one.  A nice form
of a symbolic extension is a {\it principal extension}, that is, an
extension given by a factor map which preserves entropy for every
invariant measure, see~\cite{BD}.

%If $f$ is a
%homeomorphism and $\varphi\in C^0(X)$,
%then the {\it pressure of $f$ with respect to $\varphi$ and
%$\mu\in \mathcal{M}(f)$} is
%$$P_{\mu}(\varphi, f):=h_{\mu}(f) +\int \varphi d\mu.$$  The topological pressure of  $(X,f)$, denoted
%$P(\varphi, f)$,  corresponds to a ``weighted" topological entropy,
%see~\cite[p.~623]{KH}.    The {\emph{variational principle for pressure}}
%states that if  $f$ is a
%homeomorphism of $X$ and $\varphi\in C^0(X)$, then
%$$P(\varphi, f):=\sup_{\mu\in\mathcal{M}(f)}P_{\mu}(\varphi, f).$$
%A measure $\mu$ such that $P(\varphi, f)=P_{\mu}(\varphi, f)$
%is called an {\it equilibrium state}.

%
%A dynamical system $(X,f)$ has a {\it symbolic extension} if there
%exists a subshift $(Y,\sigma)$ and a continuous surjective map
%$\pi:Y\rightarrow X$ such that $\pi \circ\sigma=f\circ\pi$. The
%system $(Y,\sigma)$ is called an {\it extension} of $(X,f)$ and
%$(X,f)$ is called a {\it factor} of $(Y,\sigma)$. Note that the subshift need not be
%of finite type and the factor map may be infinite-to-one.  A nice form
%of a symbolic extension is a {\it principal extension}, that is, an
%extension given by a factor map which preserves entropy for every
%invariant measure, see~\cite{BD}.

\subsection{Dominated splittings}\label{ss.hyperboliclike}
Through the rest of the paper we assume that $M$ is a finite dimensional, smooth,
compact, and boundaryless Riemannian manifold and $f:M\to M$ is a $C^1$ diffeomorphism.
An $f$-invariant set $\Lambda$ (not necessarily closed) has a {\it dominated
splitting\/} if the tangent bundle $T_{\Lambda}M$ has a
$Df$-invariant splitting $E\oplus F$ such that
\begin{enumerate}
\item[(i)]
 the bundles $E$ and $F$ are both non-trivial,
 \item[(ii)] the fibers $E(x)$
and $F(x)$ have dimensions independent of $x\in \Lambda$, and
\item[(iii)] there
exist $C>0$ and  $0<\lambda <1$ such that
$$
\|Df^n|E(x)\|\cdot \|Df^{-n}|F(f^n(x))\|\leq C\lambda^n,
$$
 for all
$x\in \Lambda$ and  $n\geq 0$.
\end{enumerate}

More generally, a $Df$-invariant splitting
$$
T_\Lambda M=E_1\oplus\cdots\oplus E_k
$$
is dominated if for all $i=1,\dots,(k-1)$ the splitting $T_\Lambda
M=E_1^i\oplus E_{i+1}^k$ is dominated, where $E_j^\ell=E_j\oplus
\cdots \oplus E_\ell$, for $1\le j\le\ell\le k$.

Note that the above definitions imply the continuity of the splittings.

\medskip

We consider a diffeomorphism $f$
 and  a compact $f$-invariant set $\Lambda$  with a dominated
splitting $E^s\oplus E_1\oplus\cdots \oplus E_k\oplus E^u$ as in Theorem~\ref{t.mainresultado}.
 For $x\in\Lambda$ and $i\in\{1,..., k\}$ let us denote
\begin{equation}\label{e.pp}
\begin{array}{rlll}
 E^{cs,i}(x) :=E^s(x)\oplus E_1(x)\oplus \cdots\oplus E_i(x)\textrm{ and}\\
 E^{cu,i}(x) :=E_{i}(x)\oplus\cdots \oplus E_k(x)\oplus E^u(x).
  \end{array}
\end{equation}
We also let
$E^{cs,0}=E^s$ and $E^{cu,k+1}=E^u$ and write $s=\dim (E^s)$ and $u=\dim (E^u)$.

  By definition $ E^{cs,i}(x)\oplus  E^{cu,i+1}(x)$ is a dominated splitting for $\Lambda$
  and
$$\|Df^n|_{ E^{cs,i}(x)}\|\cdot \|Df^{-n}|_{E^{cu,i+1}(x)}\|\leq C\lambda^n$$ for some $C\geq 1$ and
$\lambda\in(0,1)$, and all $i\in\{0,...,k\}$, $x\in\Lambda$, and $n\geq 0$.

The next proposition is an immediate consequence of \cite[Theorem 1]{Go} and will
simplify many of the arguments.

\begin{prop} \label{Nikolaz}
There exists an adapted Riemannian metric $\|\cdot\|_0$, equivalent
to the original one and $\lambda\in(0,1)$ such that
\begin{equation} \label{domino}
\prod_{j=0}^n \|Df| E^{cs,i}(f^j(x))\|_0 \cdot\|Df^{-1}|E^{cu,i+1}(f^{j}(x))\|_0< \lambda^n
\end{equation}
for  all $n\geq 0$, all $x\in\Lambda$, and all $i\in\{0,...,k\}$.
\end{prop}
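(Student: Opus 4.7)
The plan is to invoke Gourmelon's adapted-metric theorem [Go, Theorem 1] and then multiply along the orbit. Gourmelon's theorem, applied to the finest dominated splitting $T_\Lambda M = E^s \oplus E_1 \oplus \cdots \oplus E_k \oplus E^u$, produces a smooth Riemannian metric $\|\cdot\|_0$ on $M$, equivalent to the original, and a constant $\mu \in (0,1)$ such that every coarsening is one-step dominated at every point of $\Lambda$:
\[
\|Df|E^{cs,i}(y)\|_0 \cdot \|Df^{-1}|E^{cu,i+1}(y)\|_0 \leq \mu
\qquad \text{for all } i \in \{0,\ldots,k\} \text{ and } y \in \Lambda.
\]
The key features of Gourmelon's construction are pointwise orthogonality of the finest splitting and the pointwise character of the one-step estimate; together they give the above inequality for every coarsening $E^{cs,i}\oplus E^{cu,i+1}$ simultaneously, since orthogonality makes the operator norm on a direct sum equal to the maximum of the norms on the summands.

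With this pointwise one-step bound in hand, I would simply set $y = f^j(x)$ and multiply the $n+1$ resulting inequalities for $j = 0, 1, \ldots, n$:
\[
\prod_{j=0}^{n} \|Df|E^{cs,i}(f^j(x))\|_0 \cdot \|Df^{-1}|E^{cu,i+1}(f^j(x))\|_0 \leq \mu^{n+1} < \mu^n.
\]
Taking $\lambda := \mu$ (or any $\lambda \in [\mu, 1)$) yields the proposition.

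Because the statement is advertised as an immediate consequence of [Go, Theorem 1], there is no real obstacle: the substantive work is entirely inside Gourmelon's construction, and the multiplicative estimate of the proposition is then only a telescoping. The single point that might worry a reader is whether the adapted metric produced can serve all $k+1$ coarsenings simultaneously; this is exactly what Gourmelon's theorem delivers when applied to the finest splitting, so no separate combination argument (e.g.\ averaging $k+1$ individually adapted metrics) is needed.
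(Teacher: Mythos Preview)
Your proof is correct and follows essentially the same route as the paper: apply Gourmelon's adapted-metric theorem to the finest splitting to obtain a one-step domination estimate, then multiply along the orbit. The only cosmetic difference is that the paper records a separate constant $\lambda_i$ for each coarsening and sets $\lambda=\max_i\lambda_i$ at the end, whereas you absorb this maximum into a single $\mu$ from the outset; you are also more explicit than the paper about why one metric serves all $k+1$ coarsenings simultaneously.
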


\begin{proof}
Fix $i\in\{ 0,...,k\}$.  By \cite{Go} there exist an adapted Riemannian metric
equivalent to the original one and $\lambda_i\in (0,1)$ such that
$\|Df|E^{cs,i}(x)\|_0\cdot \|Df^{-1}|E^{cu,i+1}(f(x))\|_0\leq \lambda_i$. Since
the splitting is invariant we conclude that
$$
\prod_{j=0}^n
\|Df| E^{cs,i}(f^j(x))\|_0 \cdot\|Df^{-1}|E^{cu,i+1}(f^{j}(x))\|_0< \lambda_i^n.
$$
Letting $\lambda=\max\{\lambda_0,...,\lambda_k\}$ we prove the lemma.
\end{proof}

Throughout the rest of the paper we assume that the Riemannian metric is an adapted metric.

For each $i=0, \dots, k$,
consider the dominated splitting $E^{cs,i}\oplus E^{cu,i+1}$. 
If $V(\Lambda)$  is a small neighborhood
of $\Lambda$ then   the $f$-invariant set
\begin{equation}\label{e.LV}
\Lambda_V=\bigcap_{n\in\mathbb{Z}}f^n(\overline{V(\Lambda)})
\end{equation}
has a dominated splitting that extends the splitting on $\Lambda$, see for instance~\cite[App.~B]{BDV}. We also denote these extensions by
$ E^{cs,i}$ and $E^{cu,i+1}$. Moreover, these splittings 
can be continuously extended to $V(\Lambda)$. These extensions 
are ``nearly" invariant under $f$. That is, there are sufficiently small cone fields about the extended splitting that are invariant. We denote these extensions by $\widetilde E^{cs,i}$ and  $\widetilde  E^{cu,i+1}$ and 
the  small cone fields by
$\cC(\widetilde  E^{cs,i})$ and $\cC(\widetilde  E^{cu,i+1})$.

%For the set $\Lambda_V$ we can also extend the stable and unstable manifolds so that these foliate a neighborhood of $\Lambda_V$.
%We rescale the Riemannian metric and fix $R>1$ so that for each $x$ in the neighborhood of  $\Lambda_V$ the extended stable and unstable manifolds foliate $B(x,R)$. 

\subsection{Fake center manifolds} \label{ss.centers}
Much of this section follows Section 3 of~\cite{BW}.  The next proposition is similar to Proposition 3.1 in~\cite{BW}.  

\begin{prop}\label{p.fake}
Let $f:M\rightarrow M$ be a $C^1$ diffeomorphism and $\Lambda$ a
compact 
$f$-invariant set with a partially hyperbolic splitting, 
$$T_{\Lambda}M=E^s\oplus E_{1}\oplus \cdots \oplus E_{k}\oplus E^u.$$  
Let $E^{cs,i}$ and $E^{cu,i}$ be as in equation \eqref{e.pp} and
consider their extensions 
$\tilde E^{cs,i}$ and $\tilde E^{cu,i}$ to a small neighborhood
of $\Lambda$.

Then for any $\epsilon>0$ there exist constants $R>r>r_1>0$ such that, for every $p\in \Lambda$, the neighborhood $B(p,r)$ is foliated by foliations $\widehat{W}^u(p)$, $\widehat{W}^s(p)$, 
%$\widehat{W}^c{p}$, 
$\widehat{W}^{cs,i}(p)$, and $\widehat{W}^{cu, i}(p)$, $i\in\{1,..., k\}$,  such that for each $\beta\in \{u,s, (cs, i), (cu,i)\}$
the following properties hold:
\begin{enumerate}
\item[(i)] {\em{Almost tangency of the invariant distributions.}}  For each $q\in B(p,r)$, the leaf $\widehat{W}^{\beta}_p (q)$ is $C^1$, and the tangent space $T_q\widehat{W}^{\beta}_p(q)$ lies in a cone of radius $\epsilon$ about $\widetilde{E}^{\beta}(q)$.
\item[(ii)] {\em{Coherence.}}  $\widehat{W}^s_p$ subfoliates $\widehat{W}^{cs,i}_p$  and $\widehat{W}^u_p$ subfoliates $\widehat{W}^{cu, i}_p$ for each $i\in\{1,..., k\}$.
\item[(iii)] {\em{Local invariance.}}  For each $q\in B(p, r_1)$ we have 
$$f(\widehat{W}^{\beta}_p(q,r_1))\subset \widehat{W}^{\beta}_{f(p)}(f(q))\textrm{ and }
f^{-1}(\widehat{W}^{\beta}_p(q,r_1))\subset \widehat{W}^{\beta}_{f^{-1}(p)}(f^{-1}(q)),$$
here $\widehat{W}^{\beta}_p(q,r_1)$ is the connected component of
$\widehat{W}^{\beta}_p(q)\cap B(q,r_1)$ containing $q$.
\item[(iv)] {\em{Uniquencess.}}  $\widehat{W}^s_p(p)=W^s(p,r)$ and $\widehat{W}^u_p(p)=W^u(p,r)$.
\end{enumerate}
\end{prop}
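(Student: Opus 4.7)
The plan is to adapt the fake-foliation construction of Burns and Wilkinson \cite[Prop.~3.1]{BW} to the finer dominated splitting $E^s \oplus E_1 \oplus \cdots \oplus E_k \oplus E^u$. The underlying idea is: for each $p \in \Lambda$, transfer the dynamics to $T_p M$ via the exponential map, modify the local diffeomorphism outside a small ball so that it becomes a globally defined small $C^1$-perturbation of the constant-coefficient partially hyperbolic linear map $L_p = D_p f$, and then apply the Hirsch--Pugh--Shub (HPS) invariant manifold theorem to each two-block splitting $\widetilde E^{cs,i} \oplus \widetilde E^{cu,i+1}$ to produce invariant foliations tangent to the corresponding bundle.

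First I would linearize. Using $\exp_p$ (with target identified with $T_{f(p)}M$ via an orthonormal frame), pull back $f$ to a local diffeomorphism $\tilde f_p$ on a neighborhood of $0 \in T_p M$. By Proposition~\ref{Nikolaz}, the linear map $L_p$ preserves the constant splitting at $p$ with a uniform domination rate $\lambda < 1$. Choose a bump function $\rho$ supported in a small ball and form
\[
\hat f_p(v) := \rho(v)\, \tilde f_p(v) + (1-\rho(v))\, L_p(v),
\]
which agrees with $\tilde f_p$ near $0$ and with $L_p$ far away. Since $\tilde f_p$ is $C^1$-close to $L_p$ on the support of $\rho$ (with scale comparable to $r$), the derivative $D\hat f_p$ stays uniformly $C^0$-close to $L_p$ on all of $T_p M$. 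Hence $\hat f_p$ inherits a global dominated splitting $\widehat E^s \oplus \widehat E_1 \oplus \cdots \oplus \widehat E_k \oplus \widehat E^u$ lying inside cones of any prescribed width about the constant splitting at $p$.

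Next I would invoke HPS on $\hat f_p$: for each $i \in \{0,\dots,k\}$ the two-block splitting $\widehat E^{cs,i} \oplus \widehat E^{cu,i+1}$ yields a pair of $\hat f_p$-invariant foliations of $T_p M$ tangent to cones of width $\epsilon$ about the respective bundles. Pushing forward through $\exp_p$ produces the announced foliations on $B(p,r)$, with $i=0$ giving $\widehat W^s(p)$ and $i=k$ giving $\widehat W^u(p)$. Property~(i) is automatic from the graph-transform construction, since each leaf is realized as a graph of small Lipschitz norm over the linear bundle. Property~(iv) follows from uniqueness in the stable/unstable manifold theorem: the leaf through $p$ tangent to $E^s(p)$ is uniformly contracted by $\hat f_p = \tilde f_p$ near $p$, hence must coincide with the genuine local stable manifold $W^s(p,r)$, and symmetrically for $W^u(p,r)$. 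Coherence~(ii) is the standard subfoliation fact for nested dominated splittings: $\widehat E^s$ is dominated inside $\widehat E^{cs,i}$, so the HPS foliation tangent to $\widehat E^s$ subfoliates the one tangent to $\widehat E^{cs,i}$; the $cu$-statement follows by applying the same argument to $f^{-1}$. Local invariance~(iii) is a consequence of $\hat f_p = \tilde f_p$ on a small ball: choosing $r_1 < r$ small enough (uniformly in $p$, by compactness of $\Lambda$ and continuity of $f^{\pm 1}$) that $\exp_p(B_{r_1}(0))$ lies in this ball and $f(B(p,r_1)) \subset B(f(p), r)$, the $\hat f_p$-invariance of a local piece of a leaf translates into the stated inclusion relating the fake foliations at $p$ and $f(p)$ through the true map $f$.

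The main obstacle I expect is the uniform choice of all constants across $p \in \Lambda$ and simultaneously across the $k+2$ two-block splittings. One needs a single $r_1 < r < R$, a single cone width $\epsilon$, and uniform graph-transform contraction rates that work at every base point and every level $i$. This is exactly the place where the adapted metric of Proposition~\ref{Nikolaz}, the continuity of the extensions $\widetilde E^{cs,i}, \widetilde E^{cu,i+1}$ together with their invariant cones, and the compactness of $\Lambda$ enter essentially, so that the HPS hypotheses hold with constants independent of the base point.
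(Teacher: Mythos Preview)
Your proposal is correct and follows exactly the approach the paper indicates: the paper omits the proof, stating that it is ``very similar to the one of Proposition~3.1 in~\cite{BW}, inspired by Theorem~5.5 in~\cite{HPS}'' and that ``to get the fake foliations one use a graph transform.'' Your sketch --- pulling back via $\exp_p$, cutting off to a perturbation of the linear map $D_pf$, applying HPS to each two-block splitting $\widehat E^{cs,i}\oplus\widehat E^{cu,i+1}$, and using the adapted metric and compactness for uniformity in $p$ and $i$ --- is precisely this Burns--Wilkinson construction adapted to the finer dominated splitting, so there is nothing to add.
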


By choosing the neighborhood $V(\Lambda)$
of $\Lambda$ above sufficiently small we have that $\Lambda_V$ also satisfies the hypotheses in Proposition~\ref{p.fake}  and hence the points in $\Lambda_V$ have fake foliations as in the proposition.

\begin{rem}
\label{r.fake}
{\em{ For $\epsilon$ sufficiently small, the transversality of the invariant bundles for $\Lambda_V$ implies that, for all $p\in \Lambda_V$ 
and every $x$ and $y$ sufficiently close to $\Lambda_V$, then
$\widehat{W}^{cs,i}_p(x)\cap\widehat{W}^{cu, i+1}_p(y)$ consists of a single point for all $i\in\{0,..., k\}$. Here 
$\widehat{W}^s_p(x)=\widehat{W}^{cs,0}(x)$ and $\widehat{W}^u_p(y)=\widehat{W}^{cu, k+1}_p(y)$.}}
\end{rem}

The proof of Proposition~\ref{p.fake} is very similar to the one of Proposition 3.1 in~\cite{BW}, inspired by Theorem 5.5 in~\cite{HPS}. 
So it is omitted. In~\cite{HPS} the result is that the leaves would be tangent at $p$ to the invariant bundles, but the leaves do not form necessirely a foliation.  
In the proposition above, the fake foliations are not tangent to
the initial bundles but they
 stay within thin cone fields. The main advantage is that these fake
leaves foliate local neighborhoods. As in \cite{HPS} to get the fake foliations one use a graph transform.

\subsection{Central curves}

Throughout the rest of the paper we fix $\rho>0$ such that for all $i\in\{0,...,k\}$ and $x\in V(\Lambda)$ there exists a curve $\gamma_i(x)$ 
centered at $x$
of radius $\rho$ tangent to the bundle $E_{i}$.

The next lemma is a higher dimensional version of \cite[Lemma 2.2]{PaVi}.
Since  the proof is analogous to the one there we omit it.

\begin{lem}\label{l.sehacenestables}
Let $\Lambda$ be and $f$-invariant set as in 
Theorem~\ref{t.mainresultado}.
For any sufficiently small $\rho>0$  and any  $\delta\in(0,\rho)$,
if $x\in\Lambda_V$, $y\in\Gamma_{\delta}(x)$,
and $\gamma$ is a curve with endpoints $x$ and $y$ that
is contained in $B_{\rho}(x)$ and
tangent to $E_i$ for some $i\in\{1,...,k\}$   then
$$
\ell(\gamma)<2\, \delta\textrm{ and }  \gamma\subset\Gamma_{2\,\delta}(x).
$$
\end{lem}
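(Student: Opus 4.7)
\medbreak

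The strategy is to exploit the one-dimensionality of $E_i$: a short curve tangent to a continuous one-dimensional line field must be nearly straight, so its length is comparable to the distance between its endpoints. The whole proof then reduces to iterating this elementary geometric observation along the orbit.

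First I would fix $\rho$ small enough that the continuous extension $\widetilde E_i := \widetilde E^{cs,i}\cap \widetilde E^{cu,i}$ of $E_i$ to $V(\Lambda)$ varies in angle by at most some small $\alpha>0$ across any ball of radius $\rho$ (possible by uniform continuity of $\widetilde E_i$ on a compact neighborhood of $\Lambda$), and that the exponential charts on $B(p,\rho)$ have near-isometric distortion. The key geometric claim is then: \emph{any arclength-parametrized $C^1$ curve $\sigma:[0,L]\to B(p,\rho)$ with $\sigma'(t)\in \widetilde E_i(\sigma(t))$ for all $t$ satisfies $L < 2\,\dist(\sigma(0),\sigma(L))$.} Indeed, projecting $\sigma$ onto the one-dimensional subspace $\widetilde E_i(p)\subset T_pM$ via the exponential chart, the projected derivative has magnitude at least $\cos\alpha$, so the projection covers an extent at least $L\cos\alpha$, which in turn is bounded above by (a distortion factor close to $1$ times) the chart distance between the endpoints. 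For $\alpha$ small enough and distortion close enough to $1$ this forces $L<2\,\dist(\sigma(0),\sigma(L))$. Applied to $\gamma$ itself, this gives $\ell(\gamma)<2\delta$, since $\dist(x,y)\leq\delta$.

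To upgrade this to $\gamma\subset \Gamma_{2\delta}(x)$, I would apply the same geometric estimate to each iterate $f^n(\gamma)$, $n\in\ZZ$. The image $f^n(\gamma)$ is tangent to $\widetilde E_i$ (up to a thin cone) by the $Df$-invariance of $E_i$ along the orbit of $x\in\Lambda_V$ together with the cone-field invariance of the extensions near $\Lambda$; its endpoints $f^n(x)$ and $f^n(y)$ lie at distance at most $\delta$. The main obstacle is to verify that $f^n(\gamma)\subset B(f^n(x),\rho)$, so that the geometric claim applies at time $n$. I handle this by a monotonicity argument: since $\widetilde E_i$ is one-dimensional with angular variation less than $\alpha$ across $B(f^n(x),\rho)$, the $\widetilde E_i(f^n(x))$-coordinate of $f^n(\gamma)$ in the exponential chart is strictly monotone along the curve. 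If $f^n(\gamma)$ were to escape $B(f^n(x),\rho)$ and then return to end at $f^n(y)\in B_\delta(f^n(x))$, this excursion would either reverse the $\widetilde E_i$-coordinate twice (contradicting monotonicity) or produce a transverse excursion incompatible with the small angular variation of $\widetilde E_i$ inside the ball. Hence $f^n(\gamma)\subset B(f^n(x),\rho)$, and the geometric claim yields $\ell(f^n(\gamma))<2\delta$ for every $n\in\ZZ$. In particular, $\dist(f^n(z),f^n(x))\leq \ell(f^n(\gamma))<2\delta$ for every $z\in\gamma$ and every $n\in\ZZ$, i.e.\ $\gamma\subset \Gamma_{2\delta}(x)$.
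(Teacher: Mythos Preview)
The paper does not give its own proof of this lemma; it states that the argument is analogous to \cite[Lemma~2.2]{PaVi} and omits it. So there is no detailed proof in the paper to compare against. Your outline is headed in the right direction---the geometric claim that a curve tangent to a nearly constant line field has length controlled by the distance between its endpoints is exactly the mechanism one wants---but the iteration step has a real gap.

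The problematic sentence is ``$f^n(\gamma)$ is tangent to $\widetilde E_i$ (up to a thin cone) by \ldots\ cone-field invariance''. The cone fields supplied by domination are $\cC(\widetilde E^{cs,i})$ and $\cC(\widetilde E^{cu,i})$; the first is \emph{backward}-invariant and the second \emph{forward}-invariant. So under forward iteration you retain only that the tangent lies in the high-dimensional cone about $E^{cu,i}$, not in a thin cone about the line $E_i$; worse, the domination $E^{cs,i}\oplus E^{cu,i+1}$ actively pushes tangent vectors \emph{away} from $E_i$ and towards $E^{cu,i+1}$. The symmetric problem occurs for negative $n$. Once the tangent sits in a wide cone, both your length estimate (which needs a single nearly one-dimensional direction to project onto) and your monotonicity argument (which needs the $\widetilde E_i$-coordinate of $f^n(\gamma)$ to be strictly monotone) collapse. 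A naive induction on $n$ does not close this either: the angular defect can grow geometrically with $n$.

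What the analogous argument in \cite{PaVi} uses, and what makes the induction close here, is not cone invariance for the line bundle but the \emph{local invariance of the central plaques themselves}. One works with curves $\gamma_i(\cdot)$ coming from a locally invariant plaque family (as in \cite{HPS}, or the fake leaves of Proposition~\ref{p.fake}), so that $f(\gamma_i(x,r_1))\subset \gamma_i(f(x))$. Then the induction is clean: if $f^n(\gamma)$ lies in the plaque $\gamma_i(f^n(x),r_1)$ with endpoints $\delta$-apart, your geometric claim gives $\ell(f^n(\gamma))<2\delta$; one application of $f$ together with local invariance lands $f^{n+1}(\gamma)$ inside $\gamma_i(f^{n+1}(x))$, and the tangent control is automatic because that plaque is, by construction, tangent to $\widetilde E_i$. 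If you replace your appeal to cone-field invariance by this local-invariance input, the rest of your argument goes through.
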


\section{Proof of Theorem~\ref{t.mainresultado}}\label{s.proof}

We now proceed to prove our main theorem.  The idea of the proof is that using the fake foliations we can show that the set $\Gamma_\epsilon(x)$ is 1-dimensional for each point.  Under iteration this set will stay in a 1-dimensional set of finite radius.  Then a folklore fact will show that the set $\Gamma_\epsilon(x)$ has zero topological entropy.  As we can do this uniformly we will know that $\Lambda$ is entropy expansive with for $f$.

%From now on, $\Lambda$ is an $f$-invariant sets as in Theorem~\ref{resultado}. Associated to $\Lambda$ we have defined
%the set $\Lambda_V$ that also satisfies the hypothesis of the theorem.

\subsection{Hyperbolic-like behavior of fake foliations}\label{ss.hyperbolicfake}

\begin{rem}[Choice of constants]
\label{constants} {\em{ We  fix some constants:
\begin{enumerate}
\item[(a)] Fix $\tau>0$ such that $(1 +\tau)\sqrt{\lambda}<1$, where $\lambda<1$ is the domination
constant in \eqref{domino}.
\item[(b)] Fix $\nu>0$ sufficiently small such that $\bigcup_{x\in\Lambda}B_{5\,\nu}(x)\subset V(\Lambda)$
and such that if $y,y'\in B_{5\,\nu}(x)$ for some $x\in\Lambda$, then  for all $i\in\{0,...,k\}$ it holds
\begin{equation*}
%\label{e.cont}
1-\tau<\frac{\|Df^{-1}|_{\widetilde  E^{cu,i}
(y)}\|}{\|Df^{-1}|_{\widetilde  E^{cu,i} (y')}\|}<1+\tau\quad
\mbox{and} \quad
 1-\tau<\frac{\|Df|_{\widetilde  E^{cs,i}
(y)}\|}{\|Df|_{\widetilde  E^{cs,i} (y')}\|}<1+\tau.
\end{equation*}
\end{enumerate}}}
\end{rem}

To obtain the ``1-dimensionality'' of the sets $\Gamma_\delta(x)$ we use hyperbolic times.  
To do so the first step is the following reformulation
of Pliss Lemma  stated for  the sets $\Lambda_V$ satisfying the hypotheses of Theorem~\ref{t.mainresultado}.

\begin{lem}{\cite{Al,Pl1}} \label{Pliss} Let $\lambda>0$ be as in Proposition~\ref{Nikolaz}
and $0<\lambda<\lambda_1<\lambda_2<1$.  Assume that $x\in \Lambda_V$, $i\in\{0,..., k\}$,
and there exists $n\geq 0$
such that
\begin{equation}\label{e.ht}
\prod_{m=0}^n\|Df|_{ E^{cs,i}(f^m(x))}\|\leq \lambda_1^n\, .
\end{equation}
 Then there is  $N=N(\lambda_1,\lambda_2,f)\in \mathbb{N}$ and a constant
$c=c(\lambda_1,\lambda_2,f)>0$ such that for every $n \geq N$  there
exist $\ell\geq c\,n$ and numbers (hyperbolic times) 
$$
0<  n_1<  n_2  < \cdots < n_\ell < n
$$
such that
$$
\prod_{m=n_r}^h \|Df|_{E^{cs,i}(f^m(x))}\|\leq \lambda_2^{h-n_r},
$$
for all $r=1,2,\ldots ,\ell$ and all $h$ with
$n_r\leq h\leq n .$

Similar assertions hold for the map $f^{-1}$ and the bundles
 $E^{cu,i}$.
\end{lem}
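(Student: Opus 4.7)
The plan is to reduce the multiplicative estimate to the classical additive Pliss Lemma by taking logarithms. Set
$$
a_m:=-\log\|Df|_{E^{cs,i}(f^m(x))}\|_0 \quad\text{and}\quad A_j:=-\log\lambda_j,
$$
so that $A_1>A_2>0$. The hypothesis \eqref{e.ht} then reads $\sum_{m=0}^{n} a_m \geq n\,A_1$, and the desired conclusion becomes the existence of at least $\ell\geq c\,n$ indices $0<n_1<\cdots<n_\ell< n$ such that $\sum_{m=n_r}^{h} a_m \geq (h-n_r)\,A_2$ for every $n_r\leq h\leq n$. The only additional dynamical input needed past this reformulation is a uniform two-sided bound for $a_m$: since $M$ is compact and the extended bundle $\widetilde E^{cs,i}$ varies continuously on $\overline{V(\Lambda)}$ (see the construction following Proposition~\ref{Nikolaz}), there is $K>0$, independent of $x$, $m$, and $i$, with $|a_m|\le K$.

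With this bound in place, I would invoke the real-variable Pliss Lemma exactly as stated in \cite{Al}: given $a_0,\dots,a_n\in\mathbb{R}$ with $a_m\le K$ and $\sum_{m=0}^{n} a_m\geq n\,A_1$, for any $A_2\in(0,A_1)$ there exist at least $\ell\geq\frac{A_1-A_2}{K-A_2}\,n$ indices $n_r$ satisfying the required tail-sum inequality. Setting $c:=(A_1-A_2)/(K-A_2)$ and $N:=\lceil 1/c\rceil$ yields the numerical part of the conclusion. I would sketch the standard proof of this real-variable fact as follows: consider the partial sums $S_j:=\sum_{m=0}^{j-1}(a_m-A_2)$ with $S_0:=0$ and declare an index $n_r$ \emph{valid} when $S_{n_r}\leq S_h$ for every $n_r\leq h\leq n+1$; these are precisely the forward-minimum records of $(S_j)$ and are exactly the indices satisfying the tail-sum inequality. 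A telescoping/counting argument using the upper bound $a_m-A_2\leq K-A_2$ on the increments then shows that the number of non-valid indices is at most $n(K-A_1)/(K-A_2)$, forcing at least $c\,n$ valid ones. I do not expect a genuine obstacle here: the argument is purely combinatorial and classical, and the only subtlety is identifying the correct normalization.

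The companion assertion for $f^{-1}$ and the bundles $E^{cu,i}$ follows by applying the statement just proved to the diffeomorphism $g:=f^{-1}$ on $\Lambda_V$. By Proposition~\ref{Nikolaz} domination of $E^{cs,i-1}\oplus E^{cu,i}$ holds for $g$ with the r\^oles of the two summands exchanged, so the product hypothesis on $\|Df^{-1}|_{E^{cu,i}}\|_0$ translates into exactly the same kind of additive lower bound for the $g$-analogue of $a_m$, and the argument above applies verbatim.
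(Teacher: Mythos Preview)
Your argument is correct and is exactly the standard reduction---taking logarithms to convert the multiplicative hypothesis into an additive one, using compactness to obtain a uniform bound $|a_m|\le K$, and then invoking the classical combinatorial Pliss Lemma---that one finds in the cited references \cite{Al,Pl1}. The paper itself does not supply a proof of this lemma (it is simply quoted from those sources), so there is nothing further to compare.
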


% \begin{rem}\label{chaticies}{\em{
%  If $\xi$ is sufficiently small then $\Gamma_{2\,K\,\xi}(x)\subset \Lambda_V$.
% Moreover, if
% $y\in\Gamma_{K\,\xi}(x)$, $y\neq x$, and
%  $[x,y]^{i}$ is a curve tangent to $E_i$,
% $i\in\{1,...,i_0\}$,
% with endpoints $x$ and $y$ then $[x,y]^{i}
% \subset \Gamma_{2\,K\,\xi}(x)\subset \Lambda_V$.}}
% \end{rem}

The next application of Lemma~\ref{Pliss} provides a lower
bound for the expansion of  
$Df$ along the bundle $E^{cs,i}$
and of
$Df^{-1}$ along the bundle $E^{cu,i}$.
Given a central curve $\gamma_i(x)$ and points
$y,z\in \gamma_i(x)$ we let
  $[y,z]_{\gamma_i(x)}$ be the segment 
in $\gamma_i(x)$ with endpoints $y,z$. 
%in Remark~\ref{not}.

\begin{lem}\label{l.contraction} Consider a small enough $\delta >0$.
If $x\in\Lambda_V$, $y\in\Gamma_{\delta}(x)$, $y\neq x$, $i\in\{1,...,k\}$.
Suppose that $y \in \gamma_i(x)$.
Then   $[x,y]_{\gamma_i(x)}\subset \Lambda_V$ and if $\lambda_1\in(\lambda, \sqrt{\lambda})$  then there
is $n_0>0$ such that
$$
\prod_{j=0}^{n} \|Df|_{ E^{cs,i} (f^{j}(f^{-n}(y')))}\| >
\lambda_1^n
\quad
 \textrm{and}
\quad 
\prod_{j=0}^{n} \|Df^{-1}|_{E^{cu,i} (f^{j}(y'))}\| >
\lambda_1^n
$$
for all $y'\in  [x,y]_{\gamma_i(x)}$ and $n>n_0$.
\end{lem}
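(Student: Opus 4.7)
The plan is to exploit the $Df$-invariance and one-dimensionality of $E_i$ together with Lemma~\ref{l.sehacenestables} applied at every iterate of the segment $[x,y]_{\gamma_i(x)}$. The inclusion $[x,y]_{\gamma_i(x)}\subset\Lambda_V$ is immediate from Lemma~\ref{l.sehacenestables}: for $\delta$ small, the segment lies in $\Gamma_{2\delta}(x)\subset V(\Lambda)\subset\Lambda_V$.

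For the expansion bounds, I would first observe that $f^n([x,y]_{\gamma_i(x)})$ is a curve joining $f^n(x)$ to $f^n(y)$ that is tangent to $E_i$ everywhere, because $E_i$ is one-dimensional and $Df$-invariant. Since $f^n(y)\in\Gamma_\delta(f^n(x))$ and the curve inductively lies inside $B_\rho(f^n(x))$, applying Lemma~\ref{l.sehacenestables} at each $n\in\mathbb{Z}$ yields, writing $L:=\ell([x,y]_{\gamma_i(x)})>0$,
\[
\int_{[x,y]_{\gamma_i(x)}}\|Df^n|_{E_i(z)}\|\,dz = \ell\bigl(f^n([x,y]_{\gamma_i(x)})\bigr) < 2\delta.
\]
The mean value theorem then supplies a point $z_n^\star$ in the segment with $\|Df^n|_{E_i(z_n^\star)}\|\le 2\delta/L$; since both $z_n^\star$ and any $z\in[x,y]_{\gamma_i(x)}$ belong to $\Gamma_{2\delta}(x)$, the distortion control of Remark~\ref{constants}(b), adapted to the continuous one-dimensional bundle $E_i$ (shrinking $\nu$ if necessary), propagates the bound to
\[
\|Df^n|_{E_i(z)}\|\le(1+\tau)^{|n|}\,\frac{2\delta}{L},\qquad z\in[x,y]_{\gamma_i(x)},\;n\in\mathbb{Z}.
\]

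To conclude, fix $y'\in[x,y]_{\gamma_i(x)}$. Applying the displayed bound at $z=y'$ with $-n$ in place of $n$ gives $\|Df^n|_{E_i(f^{-n}(y'))}\|=\|Df^{-n}|_{E_i(y')}\|^{-1}\ge(1+\tau)^{-n}L/(2\delta)$. Because $E_i\subset E^{cs,i}$ and $E_i$ is one-dimensional,
\[
\prod_{j=0}^{n}\|Df|_{E^{cs,i}(f^j(f^{-n}(y')))}\|\ \ge\ \prod_{j=0}^{n}\|Df|_{E_i(f^j(f^{-n}(y')))}\|\ \ge\ \|Df^n|_{E_i(f^{-n}(y'))}\|.
\]
Since $\lambda_1<\sqrt{\lambda}$ and $(1+\tau)\sqrt{\lambda}<1$ give $(1+\tau)\lambda_1<1$, for $n>n_0:=\log(2\delta/L)/\log\bigl(((1+\tau)\lambda_1)^{-1}\bigr)$ we obtain $L/(2\delta)>((1+\tau)\lambda_1)^n$, establishing the first inequality. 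The threshold $n_0$ depends only on $L$ and previously fixed constants, so it is uniform in $y'$. The second inequality follows by applying the same pointwise bound at $z=y'$ with positive $n$, using $E_i\subset E^{cu,i}$ and the one-dimensional identity $\prod_{j=0}^n\|Df^{-1}|_{E_i(f^j(y'))}\|=(\|Df|_{E_i(f^{-1}(y'))}\|\cdot\|Df^n|_{E_i(y')}\|)^{-1}$.

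The main obstacle in my view is the iterated application of Lemma~\ref{l.sehacenestables}: one must check that each image $f^n([x,y]_{\gamma_i(x)})$ still lies inside $B_\rho(f^n(x))$ and is tangent to $E_i$. The tangency is automatic from the invariance of $E_i$, while the containment is a bootstrap that starts from the initial bound $\ell<\rho$ and is preserved by the conclusion $\ell<2\delta\ll\rho$ of the lemma itself.
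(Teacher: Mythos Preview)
Your argument is essentially correct and reaches the same conclusion as the paper, but by a different and in some ways more transparent route. One small slip: the chain of inclusions ``$\Gamma_{2\delta}(x)\subset V(\Lambda)\subset\Lambda_V$'' is backwards, since $\Lambda_V=\bigcap_{n}f^n(\overline{V(\Lambda)})\subset \overline{V(\Lambda)}$. The correct justification for $[x,y]_{\gamma_i(x)}\subset\Lambda_V$ is the one you implicitly use later: Lemma~\ref{l.sehacenestables} gives $[x,y]_{\gamma_i(x)}\subset\Gamma_{2\delta}(x)$, so every iterate $f^n([x,y]_{\gamma_i(x)})$ stays in $B_{2\delta}(f^n(x))\subset V(\Lambda)$, hence $[x,y]_{\gamma_i(x)}\subset\bigcap_n f^{-n}(\overline{V(\Lambda)})=\Lambda_V$. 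With that correction your proof goes through; the bootstrap you describe in the last paragraph is exactly what makes Lemma~\ref{l.sehacenestables} applicable at every iterate.

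The paper argues by contradiction and invokes the Pliss Lemma (Lemma~\ref{Pliss}): assuming the desired product drops below $\lambda_1^n$ along a sequence $m_n\to\infty$ at points $y_n\in[x,y]_{\gamma_i(x)}$, it transfers the bound on $\|Df|_{E^{cs,i}}\|$ along the backward orbit of $y_n$ to every point of $f^{-m_n}([x,y]_{\gamma_i(x)})$ via the distortion estimate in Remark~\ref{constants}(b), and then observes that the curve, being tangent to $E_i\subset E^{cs,i}$, would be forced to have length $\le((1+\tau)\lambda_2)^{m_n}\cdot 2\delta\to 0$, contradicting $x\ne y$. Your approach is in a sense dual: you start from the uniform length bound $\ell(f^n([x,y]_{\gamma_i(x)}))<2\delta$, extract a single point $z_n^\star$ with small Jacobian along $E_i$ by the mean value theorem, and then propagate via distortion to every $y'$. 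This avoids the contradiction setup entirely, yields an explicit formula for $n_0$, and makes clear that Pliss's Lemma is not really needed for this step (indeed, in the paper's written proof the contradiction hypothesis already gives the inequality that the appeal to Pliss is meant to produce). The price you pay is that you must justify the distortion control for the one\nobreakdash-dimensional bundle $E_i$ itself, which, as you note, follows from continuity of $E_i$ on $\Lambda_V$ after possibly shrinking $\nu$.
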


This lemma
 means that, for $\delta>0$ sufficiently small, 
if $[x,y]_{\gamma_i(x)}\subset \Gamma_{\delta}(x)$ and $y\neq x$ then for
 all point $y'\in[x,y]_{\gamma_i(x)}$ the leaves of the fake foliations $\widehat{W}^{cs,i-1}_x(y')$ and $\widehat{W}^{cu, i+1}_x(y')$ behave  like leaves of the stable and unstable foliations, respectively.  
Thus we  have the following consequence:

\begin{cor}\label{c.todd}
Let $y\in \Gamma_\delta(x) \setminus \{x\}$ such that the central curve
$[x,y]_{\gamma_i(x)}$ is contained in $\Gamma_{\delta}(x)$ for some small $\delta>0$. Then for all
 $y'\in[x,y]_{\gamma_i(x)}$ we have that
$$
\widehat{W}^{cu,i+1}_x(y')\cap \Gamma_\delta (x)=\{y'\}.
\quad
\mbox{and}
\quad
\widehat{W}^{cs,i-1}_x(y')\cap \Gamma_\delta (x)=\{y'\}.
$$ 
\end{cor}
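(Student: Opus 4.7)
I prove the first identity $\widehat{W}^{cu,i+1}_x(y')\cap\Gamma_\delta(x)=\{y'\}$; the second follows by the time-reversed argument. Assume by contradiction there is $z\neq y'$ in this intersection, and join $y'$ to $z$ by an arc $\gamma\subset\widehat{W}^{cu,i+1}_x(y')$. By Proposition~\ref{p.fake}(i) the tangent field of $\gamma$ lies in the $\epsilon$-cone $\cC(\widetilde E^{cu,i+1})$, and iterating the local invariance of Proposition~\ref{p.fake}(iii) forward, $f^n(\gamma)\subset\widehat{W}^{cu,i+1}_{f^n(x)}(f^n(y'))\subset B(f^n(x),r)$ with the tangent cone property inherited at each step. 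The strategy is to force $\ell(f^n(\gamma))$ to grow exponentially while a cone-field projection traps it in a fixed multiple of $\delta$.

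The key forward-orbit expansion of $E^{cu,i+1}$ along $y'$ comes from a shift trick: apply Lemma~\ref{l.contraction} not at $y'$ but at $y_m:=f^m(y')$. The hypotheses still hold because $y_m\in\Gamma_\delta(f^m(x))$ and $y_m$ lies on the central curve $\gamma_i(f^m(x))$. Conclusion~(i) at $y_m$ with time span $n=m$ becomes, after re-indexing, the forward-orbit estimate $\prod_{j=0}^{m}\|Df|_{E^{cs,i}(f^j(y'))}\|_0>\lambda_1^m$. Combining with the per-step domination of Proposition~\ref{Nikolaz} for the pair $E^{cs,i}\oplus E^{cu,i+1}$ yields the dual bound $\prod_{j=0}^{m}\|Df^{-1}|_{E^{cu,i+1}(f^{j+1}(y'))}\|_0\leq C(\lambda/\lambda_1)^m$; equivalently, the minimum expansion of $Df^m$ restricted to $E^{cu,i+1}$ along the orbit of $y'$ is at least $C^{-1}(\lambda_1/\lambda)^m$. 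Since tangents of $f^n(\gamma)$ stay in a narrow cone about $E^{cu,i+1}$ and Remark~\ref{constants}(b) lets us compare derivatives at nearby base points up to $(1\pm\tau)$, a standard cone-field estimate then yields $\ell(f^m(\gamma))\geq C^{-1}\bigl((1-c\epsilon)(1-\tau)\lambda_1/\lambda\bigr)^m\,\ell(\gamma)\to\infty$, using $(1+\tau)\sqrt\lambda<1$ and $\lambda_1\in(\lambda,\sqrt\lambda)$.

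For the contradiction, $f^n(\gamma)$ lies in the ball $B(f^n(x),r)$, where by continuity $E^{cu,i+1}$ rotates by at most $\epsilon$; hence the tangent of $f^n(\gamma)$ at every point is inside a single $2\epsilon$-cone around the fixed direction $E^{cu,i+1}(f^n(x))$. Projecting the arc onto that direction yields a projection advancing at speed $\geq 1-c\epsilon$ per unit arc length, whence $\ell(f^n(\gamma))\leq(1-c\epsilon)^{-1}d(f^n(y'),f^n(z))\leq 2\delta/(1-c\epsilon)$. For $m$ large this contradicts the divergence above, so no such $z$ exists. The identity for $\widehat{W}^{cs,i-1}_x(y')$ is proved analogously: apply Lemma~\ref{l.contraction}(ii) at $f^{-m}(y')$ together with the domination of $E^{cs,i-1}\oplus E^{cu,i}$ to derive backward expansion of $E^{cs,i-1}$ along $y'$, then run the same cone-field contradiction under $f^{-m}$. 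The main obstacle is the cone-field length-versus-distance comparison on the iterates of $\gamma$; it succeeds because the fake leaves sit at scale $r$ on which $E^{cu,i+1}$ rotates by at most $\epsilon$, so a single projection direction controls the whole arc, and the bookkeeping of the constants $\epsilon,r,\nu,\tau,\delta,\lambda_1$ remains compatible with the choices in Remark~\ref{constants}.
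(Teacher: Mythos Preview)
The paper itself gives no detailed proof of the corollary; it merely remarks that Lemma~\ref{l.contraction} makes the fake leaves $\widehat W^{cs,i-1}_x(y')$ and $\widehat W^{cu,i+1}_x(y')$ ``behave like leaves of the stable and unstable foliations'' and states the corollary as an immediate consequence. Your argument is therefore a genuine attempt to fill in what the paper leaves implicit, and the overall strategy --- derive forward expansion of $E^{cu,i+1}$ along the orbit of $y'$ from Lemma~\ref{l.contraction} plus domination, and contradict the length bound coming from $z\in\Gamma_\delta(x)$ --- is exactly the intended mechanism.

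There is, however, a real gap in your ``shift trick''. When you invoke Lemma~\ref{l.contraction} at the shifted base $(f^m(x),f^m(y))$ and set $n=m$, you are implicitly assuming $m>n_0(m)$, where $n_0(m)$ is the threshold the lemma produces for the curve $[f^m(x),f^m(y)]_{\gamma_i(f^m(x))}$. Inspecting the proof of Lemma~\ref{l.contraction}, one sees that a failing time $n$ must satisfy $\ell_m:=\ell\bigl([f^m(x),f^m(y)]\bigr)\le\bigl((1+\tau)\lambda_2\bigr)^{n}\cdot 2\delta$, so that $n_0(m)$ is of order $\log(2\delta/\ell_m)$. If the central curve happens to contract forward (nothing in the hypotheses prevents $E_i$ from being strongly contracting along this orbit), then $\ell_m\to 0$ and $n_0(m)\to\infty$; there is no reason to expect $m>n_0(m)$ eventually, and your forward estimate $\prod_{j=0}^m\|Df|_{E^{cs,i}(f^j(y'))}\|>\lambda_1^m$ is not justified. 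This is not a cosmetic issue: the first inequality of Lemma~\ref{l.contraction}, applied directly at $y'$, only controls the \emph{backward} orbit, which yields backward contraction of $\widehat W^{cu,i+1}_x(y')$ --- not a contradiction with $z\in\Gamma_\delta(x)$.

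A way to repair the argument without the shift is to compare lengths relatively: set $R_n:=\ell(f^n(\gamma))/\ell_n$ and use the per-step domination $|Df|_{E_i}|\le\lambda\, m(Df|_{E^{cu,i+1}})$ (which holds at every point, no Pliss needed) together with the cone estimates to get $R_{n+1}/R_n\ge (1-c\epsilon)/\bigl((1+\tau)\lambda\bigr)>1$ for all $n\in\mathbb Z$. This forces $R_n\to\infty$; since $\ell(f^n(\gamma))\le C\delta$ and $\ell_n\le 2\delta$ you now have to combine the forward blow-up of $R_n$ with the backward decay to squeeze out the contradiction. That extra step is what is missing; once you supply it (or otherwise justify uniformity of $n_0$), the rest of your write-up is fine.
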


\begin{proof}[Proof of Lemma \ref{l.contraction}]
By Lemma~\ref{l.sehacenestables}, if $\delta$ is sufficiently small then
 the curve
$[x,y]_{\gamma_i(x)}$ is contained in $\Lambda_V$ and thus
the bundles $ E^{cs,i}(y')$ and $ E^{cu,i}(y')$ 
are defined along the orbit of any
$y'\in[x,y]_{\gamma_i(x)}$.

Let  $\lambda_2\in(\lambda_1,1)$ such that $(1+\tau)\,\lambda_2<1$, 
where $\tau$ is  as in
 Remark~\ref{constants}~(b).
Let us prove the first inequality.
 Arguing  by contradiction,
suppose that there exist
infinitely many $m_n\in\mathbb{N}$, $m_n\to \infty$, 
and $y_n\in [x,y]_{\gamma_i(x)}$
such that 
$$
\prod_{j=0}^{m_n} \|Df|_{ E^{cs,i} (f^{j}(f^{-m_n} (y_n)))}\| \le 
\lambda_1^n.
$$
Writing 
$w_n=f^{-m_n}(y_n)$, 
by
Lemma \ref{Pliss} 
we have
$$
\prod_{j=0}^{m_n} \|Df |_{E^{cs,i}(f^{j}(w_n))}\| \leq \lambda_2^{m_n}.
$$
By Lemma~\ref{l.sehacenestables}, for all $j\geq 0$, the curve $f^{-j}([x,y]_{\gamma_i(x)})$
stays $2\,\epsilon$-close to $f^{-j}(x)$,
and thus $2\,\epsilon$-close to $f^{-j}(w_n)$.  It follows from
Remark~\ref{constants}~(b)
  that, for all $y'\in f^{-m_n} ([x,y]_{\gamma_i(x)})$, one has
$$
\prod_{j=0}^{m_n} \|Df |_{E^{cs,i}(f^{j}(y'))}\| \leq 
\big((1+\tau)\,
\lambda_2\big)^{m_n}.
$$
Since $[x,y]_{\gamma_i(x)}=f^{m_n}\big([f^{-m_n}(x), f^{-m_n}(y)]_{
\gamma_i(  f^{-m_n}(x) ) } \big)$
we have that
$$
\ell \big([x,y]_{\gamma_i(x)} \big)
\leq \big( (1 +\tau)\,\lambda_2 \big)^{m_n}\,\,
\ell\big( [f^{-m_n}(x),f^{-m_n}(y)]_{\gamma_i(  f^{-m_n}(x) ) }\big).
$$
By Lemma~\ref{l.sehacenestables}, if $\epsilon$ is small enough,
 $\ell\big([f^{-m_n}(x),f^{-m_n}(y)]_{\gamma_i(  f^{-m_n}(x) ) }\big)$ is bounded by
$2\, \delta$. Thus
letting
$m_n\to+\infty$
  we get that $x=y$, a contradiction.
  
 To get the other product we simply look to the
bundle $E^{cu,i}$ and the map $f^{-1}$ and repeat the above argument. 
\end{proof}

\subsection{End of the proof of Theorem~\ref{t.mainresultado}}
\label{ss.end}
The main step of the   proof of Theorem~\ref{t.mainresultado} is the following
result.

\begin{prop}\label{p.bowenballs}
For $x\in\Lambda$ and $\delta>0$ sufficiently small the set $\Gamma_\delta(x)$ is either $\{x\}$ or is contained in a curve $\gamma_i(x)$ for some $i\in\{1,..., k\}$.
\end{prop}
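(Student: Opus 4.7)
The plan is to use the fake foliations of Proposition~\ref{p.fake} to decompose any point $y \in \Gamma_\delta(x) \setminus \{x\}$ as a chain of displacements, one tangent to each of $E^s, E_1, \ldots, E_k, E^u$, and then to show that only a single displacement, in some $E_i$, can be nontrivial. For $y$ close to $x$, define the fake projections
\[
y^{(i)} := \widehat{W}^{cs,i}_x(x) \cap \widehat{W}^{cu,i+1}_x(y), \qquad i = 0, 1, \ldots, k,
\]
each a single point by Remark~\ref{r.fake}. By coherence, $y^{(i-1)}$ and $y^{(i)}$ both lie on the 1-dimensional intersection $\widehat{W}^{cs,i}_x(x) \cap \widehat{W}^{cu,i}_x(y)$, whose tangent space sits in a thin cone around $E_i$; so $y$ is joined to $x$ by a chain $x, y^{(0)}, y^{(1)}, \ldots, y^{(k)}, y$ with one leg inside $\widehat{W}^s_x(x)$, one tangent to each central $E_i$, and one inside $\widehat{W}^u_x(y)$. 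An auxiliary fact, coming from the local invariance (Proposition~\ref{p.fake}(iii)) and the local product structure of the fake foliations, is that each $y^{(i)}$ lies in $\Gamma_{C\delta}(x)$ for a geometric constant $C$ independent of $y$ and $\delta$.

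The first step is to eliminate the extremal legs. Since $\widehat{W}^s_x(x) = W^s(x,r)$ by Proposition~\ref{p.fake}(iv), if $y^{(0)} \neq x$ then the uniform contraction on $E^s$ forces $d(f^{-n}(x), f^{-n}(y^{(0)}))$ to grow like $\mu^{-n}$ with $\mu < 1$. Local invariance keeps $f^{-n}(y)$ on the fake leaf $\widehat{W}^{cu,1}_{f^{-n}(x)}(f^{-n}(y^{(0)}))$, and the domination of $E^{cu,1}$ by $E^s$ yields $\|Df^{-n}|_{E^{cu,1}}\|/\|Df^{-n}|_{E^s}\| \leq C\lambda^n$, so the fake leaf distance $d(f^{-n}(y), f^{-n}(y^{(0)}))$ is of strictly lower order than $d(f^{-n}(x), f^{-n}(y^{(0)}))$. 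Hence $d(f^{-n}(x), f^{-n}(y)) \to \infty$, contradicting $y \in \Gamma_\delta(x)$, so $y^{(0)} = x$. The mirror argument for $f^n$ gives $y^{(k)} = y$.

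To show at most one central leg is nontrivial, I would induct on $m := \#\{i : y^{(i-1)} \neq y^{(i)}\}$. The case $m = 0$ forces $y = x$, and $m = 1$ yields $y \in \gamma_i(x)$ via the single $E_i$-tangent arc from $x$ to $y$. For $m \geq 2$, let $j$ be the largest nontrivial index; using the subfoliation structure, $y^{(j-1)}$ inherits the decomposition of $y$ for $i < j$ and is fixed by the later projections, hence has exactly $m-1$ nontrivial legs and lies in $\Gamma_{C\delta}(x)$. The inductive hypothesis applied at scale $C\delta$ gives $y^{(j-1)} \in \gamma_i(x)$ for some $i < j$, and Lemma~\ref{l.sehacenestables} then places $[x, y^{(j-1)}]_{\gamma_i(x)}$ inside $\Gamma_{2C\delta}(x)$. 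Corollary~\ref{c.todd} yields
\[
\widehat{W}^{cu,i+1}_x(y^{(j-1)}) \cap \Gamma_{2C\delta}(x) = \{y^{(j-1)}\}.
\]
Since $y \in \widehat{W}^{cu,j}_x(y^{(j-1)}) \subset \widehat{W}^{cu,i+1}_x(y^{(j-1)})$ (coherence, as $j \geq i+1$) and $y \in \Gamma_\delta(x) \subset \Gamma_{2C\delta}(x)$, this forces $y = y^{(j-1)}$, contradicting $y^{(j-1)} \neq y^{(j)} = y$.

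The principal obstacle is the auxiliary fact from the first paragraph: that every fake projection $y^{(i)}$ of a point in $\Gamma_\delta(x)$ remains in a dynamical ball of radius $C\delta$, with $C$ geometric and uniform in $\delta$. This rests on the local invariance of Proposition~\ref{p.fake}(iii) together with uniform transversality estimates between $\widehat{W}^{cs,i}$ and $\widehat{W}^{cu,i+1}$ along the orbit, and one must fix $\delta$ small enough at the outset that the successive scales $C\delta, 2C\delta$ generated by the induction stay in the range where Lemma~\ref{l.sehacenestables} and Corollary~\ref{c.todd} remain valid.
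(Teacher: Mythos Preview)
Your overall strategy mirrors the paper's: eliminate the uniformly hyperbolic directions (your first step is the paper's Lemma~\ref{l.1}), project through the fake foliations and show those projections remain in a dynamical ball of comparable radius (your ``auxiliary fact'' is the paper's Lemma~\ref{l.3}, proved via local invariance and the transversality Lemmas~\ref{l.fkanglesa}--\ref{l.fkanglesb}), and then invoke Corollary~\ref{c.todd} to rule out more than one nontrivial central direction. Your induction on the number $m$ of nontrivial legs is a clean reorganisation of the paper's ``largest index $\iota$'' descent.

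There is, however, a real gap in the base case and hence in what you feed into the inductive step. When $m=1$ you place $y$ on the one-dimensional arc $\widehat W^{cs,i}_x(x)\cap \widehat W^{cu,i}_x(x)$, whose tangent lies only in an $\epsilon$-cone about $E_i$ by Proposition~\ref{p.fake}(i); this arc is \emph{not} the fixed central curve $\gamma_i(x)$, which is genuinely tangent to $E_i$. But Lemma~\ref{l.sehacenestables}, Lemma~\ref{l.contraction} and Corollary~\ref{c.todd} are all stated for $\gamma_i(x)$, and your inductive step applies Corollary~\ref{c.todd} to $y^{(j-1)}$ precisely under the hypothesis that it sits on such a curve. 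The paper closes exactly this gap: instead of intersecting two fake leaves, it builds the auxiliary submanifold $\widetilde W^{cs,j}(x)=\bigcup_{z\in\gamma_j(x)}\widehat W^{cs,j-1}_x(z,r)$ (equation~\eqref{e.ffake}) and projects $y$ through $\widehat W^{cu,j+1}_x(y)$ first to $\widetilde y_j\in\widetilde W^{cs,j}(x)$ and then along $\widehat W^{cs,j-1}_x$ to a point $\overline y_j$ lying \emph{on} $\gamma_j(x)$; Lemma~\ref{l.3} shows $\overline y_j\in\Gamma_{\widehat\kappa\,\delta}(x)$, so Corollary~\ref{c.todd} applies legitimately. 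You could either insert this extra projection onto $\gamma_j(x)$ into your chain, or else verify that Lemma~\ref{l.contraction} and Corollary~\ref{c.todd} extend to curves whose tangent merely lies in a narrow cone about $E_i$ (they should, with essentially the same Pliss-lemma argument), but as written the conclusion ``$y\in\gamma_i(x)$'' in the $m=1$ case is not justified, and without it the induction does not run.
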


We postpone the proof of this proposition
and prove the theorem.

\medskip

\noindent{\bf{Proof of Theorem~\ref{t.mainresultado}.}}
Let $f$ and $\Lambda$ satisfy the hypothesis of Theorem~\ref{t.mainresultado}.  Then from Proposition~\ref{p.bowenballs} we know that for $\delta>0$ sufficiently small the set $\Gamma_{\delta}(x)$ is a single point or contained in a central curve $\gamma_i(x)$.   It is  a folklore fact 
that the entropy of 1-dimensional curves of bounded length is zero (for a proof see for instance \cite{BFSV}). 
This implies that $\tilde h(f, \Gamma_\epsilon(x))=0$ for all $x$ and 
every small $\epsilon$.
 Hence, the set $\Lambda$ is entropy expansive for $f$. \hfill
$\Box$

\bigskip

\noindent{\bf{Proof of Proposition~\ref{p.bowenballs}.}}
  Fix $x\in \Lambda$ and assume that 
there is 
 $y\in\Gamma_\delta(x)\setminus \{x\}$. We start with the following lemma.

\begin{lem}\label{l.1}
For every small enough $\delta>0$,
$$
\Gamma_\delta(x) \subset
\widehat W^{cs,k}_x(x) \cap \widehat W^{cu,0}_x(x).
$$
\end{lem}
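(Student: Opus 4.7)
The plan is to prove the two inclusions separately; by symmetry, swapping $f$ with $f^{-1}$, interchanging the roles of $E^s$ and $E^u$, and replacing $\widehat W^{cs,k}$ by $\widehat W^{cu,0}$, it is enough to show $\Gamma_\delta(x)\subset\widehat W^{cs,k}_x(x)$.

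Fix $y\in\Gamma_\delta(x)$ and use the local product structure of Remark~\ref{r.fake} (case $i=k$) to define
$$z:=\widehat W^u_x(y)\cap\widehat W^{cs,k}_x(x),$$
so that $y\in\widehat W^{cs,k}_x(x)$ is equivalent to $z=y$. Since the cone fields $\mathcal{C}(\widetilde E^u)$ and $\mathcal{C}(\widetilde E^{cs,k})$ are uniformly transverse, the projection from $\widehat W^u_x(y)$ onto $\widehat W^{cs,k}_x(x)$ along fake unstable leaves is uniformly Lipschitz, hence $d(x,z)\le K\,d(x,y)\le K\delta$ for some fixed $K>0$. Choosing $\delta$ so that $K\delta<r_1$ places $z$ in $B(x,r_1)$. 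The same construction at each $f^n(x)\in\Lambda$ yields
$$z_n:=\widehat W^u_{f^n(x)}(f^n(y))\cap\widehat W^{cs,k}_{f^n(x)}(f^n(x))\in B(f^n(x),K\delta)\subset B(f^n(x),r_1),$$
since $f^n(y)\in B(f^n(x),\delta)$. An induction using local invariance (Proposition~\ref{p.fake}(iii)), together with the fact that two fake leaves of the same type either coincide or are disjoint, shows that $f(z_n)=z_{n+1}$, hence $z_n=f^n(z)$ for all $n\ge 0$.

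Set $L_n:=\widehat W^u_{f^n(x)}(f^n(y))$ and let $d_n$ be the arc-length of $[f^n(y),f^n(z)]_{L_n}$. Because $TL_n\subset\mathcal{C}(\widetilde E^u)$ and, by Remark~\ref{constants}(b) combined with the uniform expansion of $E^u$ on $\Lambda$, every vector in that cone is expanded by $Df$ by a uniform factor $\mu>1$, we obtain $d_{n+1}\ge\mu\,d_n$ and therefore $d_n\ge\mu^n d_0$, where $d_0$ is the initial arc-length. On the other hand, $L_n\subset B(f^n(x),r)$ is a $C^1$-graph almost tangent to $\widetilde E^u$, so its total arc-length is bounded by a constant $M$ independent of $n$. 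Combining these gives $\mu^n d_0\le M$ for every $n$, which forces $d_0=0$, and hence $y=z\in\widehat W^{cs,k}_x(x)$.

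The main technical obstacle is maintaining the identification $z_n=f^n(z)$ along the whole forward orbit: one must verify that the Lipschitz bound $d(f^n(x),z_n)\le K\delta$ persists uniformly in $n$, so that the hypotheses of Proposition~\ref{p.fake}(iii) can be re-applied at each step, and that the fake unstable leaves $L_n$ have a uniform arc-length bound. Both facts rest on a careful coordination of the constants $\tau,\nu,\epsilon,r,r_1,\delta$ from Remark~\ref{constants} and Proposition~\ref{p.fake}, in particular on taking $\delta\ll r_1\ll r$ and on the cone opening $\epsilon$ being small enough to preserve uniform expansion in $\mathcal{C}(\widetilde E^u)$.
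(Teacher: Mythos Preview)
Your argument is correct and is precisely a fleshed-out version of the paper's own proof: the paper simply says that if $y\notin\widehat W^{cs,k}_x(x)$ then, since $\widetilde E^u$ is uniformly expanding, forward iterates of $y$ escape from a neighborhood of $x$, contradicting $y\in\Gamma_\delta(x)$. You have supplied the details the paper omits --- the auxiliary point $z$, the inductive use of local invariance to identify $z_n$ with $f^n(z)$, and the arc-length growth bound --- but the underlying mechanism is identical.
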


\begin{proof}
We see that $\Gamma_\delta(x) \subset
\widehat W^{cs,k}_x(x)$, the inclusion 
$\Gamma_\delta(x) \subset
\widehat W^{cu,0}_x(x)$ follows similarly. 
Take $x\in \Gamma_\delta(x)$.
If  $y\not\in \widehat{W}^{cs,k}_x(x)$,
then
as $\widetilde{E}^u$ is uniformly expanding, after forward iterations
the orbit of  $y$ will scape from 
$\widehat{W}^{cs,k}_x(x)$ and thus from
 $x$,  contradicting that $y\in \Gamma_\delta(x)$. This ends the proof
of the lemma.
\end{proof}

Given $j\in \{1,\dots,k\}$, 
using Proposition~\ref{p.fake}, 
we consider small $r$ and the
submanifold
\begin{equation}\label{e.ffake}
\widetilde W^{cs,j}(x)=\bigcup_{z\in \gamma_j(x)}\,
\widehat W^{cs,j-1}_x(z,r).
\end{equation}
This submanifold has dimension $s+j$ and is transverse
to $\widehat W^{cu,j+1}_x(z)$ for all $z$ close to $x$.
Note that 
$\widetilde W^{cs,1}(x)$ is foliated by stable manifolds
(recall that $\widehat W^{cs,0}_x(z)\subset W^{s}(z)$).

For every $j\in \{1, \dots, k\}$ and every
$y\in \Gamma_{\delta}(x) \cap \widehat W^{cs,j}_x(x)$
we associate
a pair of points 
 $\widetilde y_{j}\in
\widetilde W^{cs,j}(x)$ and $\overline y_j\in \gamma_j(x)$ 
defined as follows, see Figure~\ref{f.zz},

\begin{equation}\label{e.pqp}
%\begin{split}
\widetilde y_{j} \eqdef  W^{cu,j+1}_x (y) \cap \widetilde W^{cs,j}(x),
\quad \mbox{where} \quad
\widetilde y_j = \widehat W^{cs,j-1}_x (\overline y_j) \cap \gamma_j(x).
%\end{split}
\end{equation}

\begin{figure}[htb]

\psfrag{y}{$y$} 
\psfrag{x}{$x$} 
\psfrag{by}{$\overline y_j$} 
\psfrag{ty}{$\widetilde y_j$} 
\psfrag{tw}{$\widetilde W^{cs,j}(x)$}
\psfrag{hw}{$\widehat W^{cs,j-1}_x (\overline y_j)$}
\psfrag{wy}{$\widehat W^{cu,j+1}_x (y)$}
\psfrag{g}{$\gamma_j(x)$}

   \includegraphics[width=8cm]{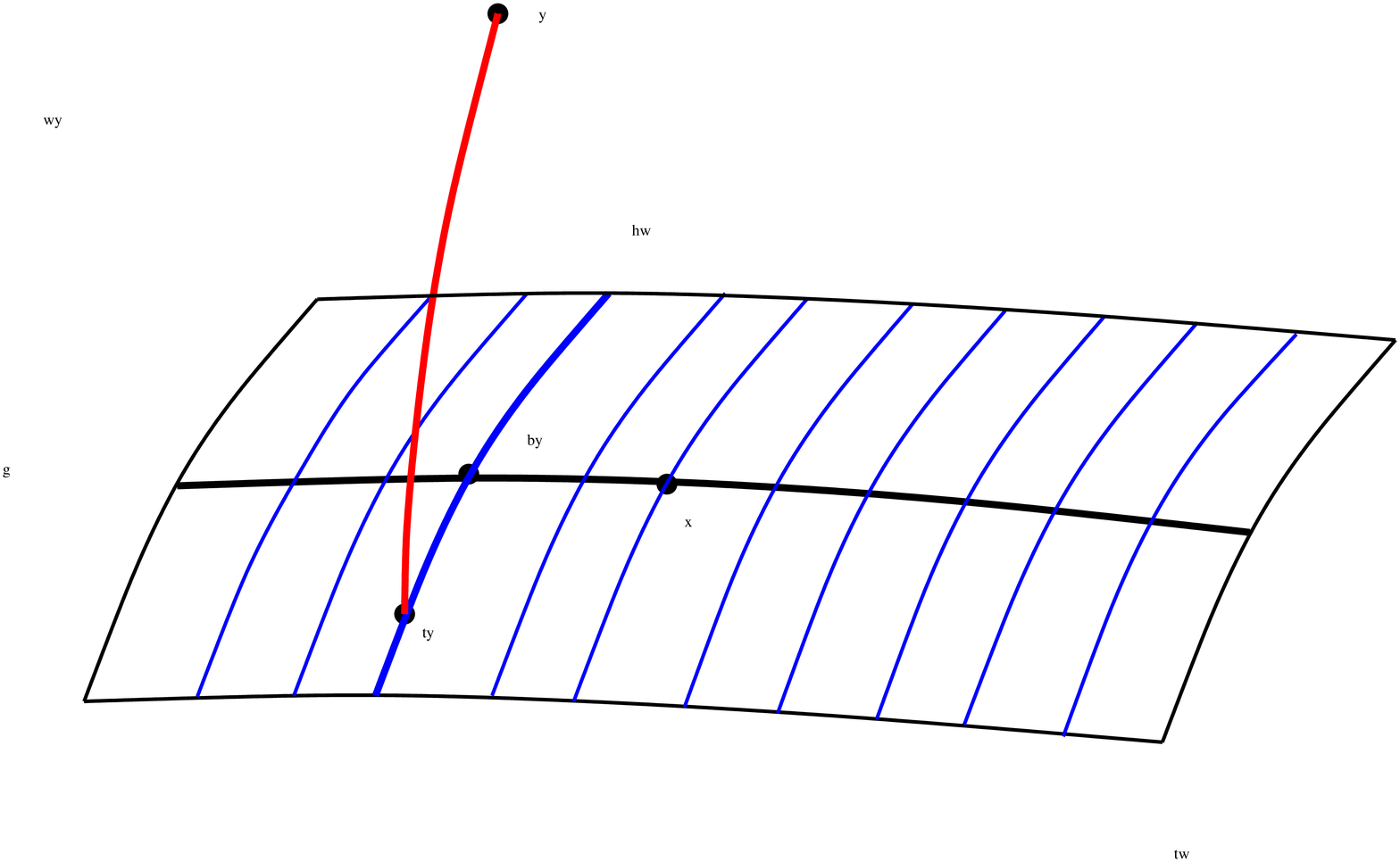}\hspace{1cm}
    \caption{The points
$\widetilde y_j$ and 
 $\overline y_j$.}
  \label{f.zz}
\end{figure}

\begin{cla}\label{c.1}
Given small $\delta>0$ and 
 $y\in \Gamma_{\delta}(x) \cap \widehat W^{cs,j}_x(x)$ then
\begin{enumerate}
 \item[a)] either $\overline y_j\ne x$,
\item[b)] $\overline y_j = x$ and  $\widetilde y_j\ne x$.
\end{enumerate}
\end{cla}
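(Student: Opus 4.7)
The plan is to interpret Claim~\ref{c.1} as the assertion that, for $y\in\Gamma_\delta(x)\setminus\{x\}$ with $y\in\widehat{W}^{cs,j}_x(x)$, the pair $(\overline y_j,\widetilde y_j)$ cannot simultaneously equal $(x,x)$. Once this is established, exactly one of the alternatives (a), (b) must hold. So I would argue by contradiction, assuming both $\overline y_j=x$ and $\widetilde y_j=x$, and derive $y=x$.

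First, I would translate $\widetilde y_j=x$ into a statement about $y$ itself. By the definition in~\eqref{e.pqp}, $\widetilde y_j$ is the unique intersection point of $\widehat{W}^{cu,j+1}_x(y)$ with the submanifold $\widetilde W^{cs,j}(x)$ from~\eqref{e.ffake}. If this intersection point equals $x$, then $x\in\widehat{W}^{cu,j+1}_x(y)$, and since the leaves of $\widehat{W}^{cu,j+1}_x$ partition $B(x,r)$, this is equivalent to $y\in\widehat{W}^{cu,j+1}_x(x)$.

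Second, I would combine the conclusion $y\in\widehat{W}^{cu,j+1}_x(x)$ with the standing hypothesis $y\in\widehat{W}^{cs,j}_x(x)$ and close the argument by a transversality count. By the almost tangency property~(i) of Proposition~\ref{p.fake}, the tangent spaces to $\widehat{W}^{cs,j}_x$ and $\widehat{W}^{cu,j+1}_x$ lie inside arbitrarily thin cones about $\widetilde E^{cs,j}$ and $\widetilde E^{cu,j+1}$, respectively. Since $E^{cs,j}\oplus E^{cu,j+1}=T_\Lambda M$, these two cone fields are complementary in a neighborhood of $x$, so the two fake leaves are transverse at $x$. Being $C^1$ submanifolds of complementary dimensions inside $B(x,r)$, they therefore meet only at the point $x$; this is exactly the local uniqueness of intersections recorded in Remark~\ref{r.fake}. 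Hence $y=x$, contradicting $y\in\Gamma_\delta(x)\setminus\{x\}$.

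The only routine point is to arrange that $\delta$ is small compared to the radius $r$ of the fake foliations from Proposition~\ref{p.fake}, so that $y$, $\widetilde y_j$, and $\overline y_j$ all lie in the ball $B(x,r)$ where the leaves and their transverse intersections are defined; this is harmless since $\delta$ is at our disposal. I do not anticipate any substantial obstacle: Claim~\ref{c.1} is a purely local transversality statement about the fake foliations at $x$ and, in contrast with subsequent steps in Section~\ref{ss.end}, does not require the hyperbolic time machinery of Lemma~\ref{l.contraction}.
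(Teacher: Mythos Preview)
Your argument is correct and is essentially identical to the paper's proof: both assume $\overline y_j=\widetilde y_j=x$, deduce from $\widetilde y_j=x$ that $y\in\widehat W^{cu,j+1}_x(x)$, and then invoke Remark~\ref{r.fake} to conclude $\widehat W^{cu,j+1}_x(x)\cap\widehat W^{cs,j}_x(x)=\{x\}$, contradicting $y\ne x$. Your write-up simply spells out in more detail the passage from $\widetilde y_j=x$ to $y\in\widehat W^{cu,j+1}_x(x)$ and the transversality behind Remark~\ref{r.fake}.
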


\begin{proof}
It is enough  to see that the case $\overline y_j=\widetilde y_j=x$ can not occur. 
If so, by Remark~\ref{r.fake}, $y\in \widehat W^{cu,j+1}_x(x) \cap \widehat W^{cs,j}_x(x)=\{x\}$,
which is a contradiction.
\end{proof}

The  next two lemmas follow straighforwardly 
from the fact that the angles 
between unitary vectors in the cone fields
$\cC (E^{cs,j})$ and 
 $\cC(E^{cu,j+1})$ are uniformly bounded away from zero.

\begin{lem}\label{l.fkanglesa}
 There is $\kappa>0$ such that for every $j\in \{1,\dots ,k\}$ and
every $\delta>0$ small enough the following property holds:

For every $x\in \Lambda$, every $y\in B_\delta(x)$, 
every local submanifolds $N(x)$ of dimension  $s+j$ 
tangent to the conefield $\cC (E^{cs,j})$ containing $x$
and $M(y)$ of dimension $(k-j)+u$
tangent to the conefield  $\cC(E^{cu,j+1})$ containing $y$
one has that
$N(x) \cap M(y)$ is contained $B_{\kappa\,\delta}(x)$. 
\end{lem}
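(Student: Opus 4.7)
The plan is to reduce the claim to a Banach fixed-point argument in a normal chart at $x$. The geometric content is that $N(x)$ and $M(y)$ are locally graphs with uniformly small Lipschitz constants transverse to complementary directions, so their intersection, being a fixed point of a uniformly contracting map, is localized near $x$ by an amount controlled by $\dist(x,y)\le\delta$.

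First I would use compactness of $\Lambda$ together with the continuous extension of the splitting $E^{cs,j}\oplus E^{cu,j+1}$ to $V(\Lambda)$ to fix constants uniform in $x\in\Lambda$. The domination gives a uniform lower bound $\theta_0>0$ on the angle $\angle(u,v)$ for unit vectors $u\in \cC(\widetilde E^{cs,j}(z))$ and $v\in \cC(\widetilde E^{cu,j+1}(z))$, valid for all $z$ in a neighborhood of $\Lambda$. After possibly shrinking the cones so that their opening $\epsilon_0$ satisfies $\epsilon_0\ll\theta_0$, uniform continuity of the extended splitting on $V(\Lambda)$ also yields a constant $C>0$ with $\angle(\widetilde E^{cu,j+1}(y),\widetilde E^{cu,j+1}(x))\le C\,\dist(x,y)$ for $y$ close to $x\in\Lambda$.

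Next, working in the exponential chart $\exp_x\colon T_xM\to M$ and identifying $T_xM=E^{cs,j}(x)\oplus E^{cu,j+1}(x)$ with coordinates $(u,v)$, the tangency of $N(x)$ to $\cC(\widetilde E^{cs,j})$ together with $x\in N(x)$ yields a graph representation $N(x)=\{(u,\varphi_N(u))\}$ with $\varphi_N(0)=0$ and $\mathrm{Lip}(\varphi_N)\le L$, where $L=L(\epsilon_0,\theta_0)\to 0$ as $\epsilon_0\to 0$. Similarly, provided $\delta$ is small enough (so that $\widetilde E^{cu,j+1}(y)$ is close enough to $\widetilde E^{cu,j+1}(x)$), the submanifold $M(y)$ is a graph $M(y)=\{(\psi_M(v),v)\}$ over $E^{cu,j+1}(x)$ with $\mathrm{Lip}(\psi_M)\le L$; writing $\exp_x^{-1}(y)=(u_y,v_y)$ the condition $y\in M(y)$ reads $\psi_M(v_y)=u_y$, hence $|\psi_M(0)|\le|u_y|+L|v_y|\le(1+L)\delta$. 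A point in $N(x)\cap M(y)$ corresponds to $(u,v)$ with $v=\varphi_N(u)$ and $u=\psi_M(v)$, i.e., a fixed point of $T(u):=\psi_M(\varphi_N(u))$, which has Lipschitz constant at most $L^2<1$. Banach's theorem produces a unique such $u^\ast$, with
\begin{equation*}
|u^\ast|\le\frac{|T(0)|}{1-L^2}=\frac{|\psi_M(0)|}{1-L^2}\le\frac{(1+L)\delta}{1-L^2}.
\end{equation*}
Pulling this bound back through $\exp_x$, whose distortion on $B_\delta$ is bounded by a uniform factor for $\delta$ small, gives $N(x)\cap M(y)\subset B_{\kappa\delta}(x)$ with $\kappa$ depending only on $L$.

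The main obstacle is merely bookkeeping: one must check that the constants $\theta_0$, $C$, $L$, the radius of the chart in which the graph representations are valid, and the distortion of $\exp_x$, can all be chosen uniformly in $x\in\Lambda$ and independently of the particular submanifolds $N(x)$ and $M(y)$ satisfying the hypothesis. This uniformity is a consequence of compactness of $\Lambda$, continuity of the extended splitting on $V(\Lambda)$, and the fact that the cone fields $\cC(\widetilde E^{cs,j})$ and $\cC(\widetilde E^{cu,j+1})$ were fixed once and for all in Subsection~\ref{ss.hyperboliclike}.
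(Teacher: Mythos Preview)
Your argument is correct and is precisely the standard way to make rigorous what the paper leaves as a one-line assertion: the authors simply remark that Lemmas~\ref{l.fkanglesa} and~\ref{l.fkanglesb} ``follow straightforwardly from the fact that the angles between unitary vectors in the cone fields $\cC(E^{cs,j})$ and $\cC(E^{cu,j+1})$ are uniformly bounded away from zero,'' without further details. Your graph-over-complementary-subspaces plus Banach fixed-point computation is exactly how one unpacks that uniform transversality into the quantitative bound $\kappa\,\delta$, so there is no methodological difference, only a difference in the level of detail.
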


\begin{lem}\label{l.fkanglesb}
 There is $\kappa>0$ such that for every $j\in \{1,\dots ,k\}$ and
every $\delta>0$ small enough the following property holds:

Take any  $x\in \Lambda$
and the local manifold $\widetilde W^{cs,j}(x)$ in \eqref{e.ffake}.
For every $y\in B_\delta(x)\cap \widetilde W^{cs,j}(x)$ 
one has that
$\gamma_j(x) \cap 
\widehat W^{cs,j-1}_x (y)$ is contained in $B_{\kappa\,\delta}(x)$. 
\end{lem}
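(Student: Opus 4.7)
My plan is to exploit the product-like structure of $\widetilde W^{cs,j}(x)$ built in \eqref{e.ffake}. By definition it is the union, over $z$ running through the central curve $\gamma_j(x)$, of the plaques $\widehat W^{cs,j-1}_x(z,r)$. Any two such plaques either coincide or are disjoint, since they are leaves of the fake foliation $\widehat W^{cs,j-1}_x$ of $B(x,r)$, and $\gamma_j(x)$ is transverse to $\widehat W^{cs,j-1}_x(x)$ at $x$ because $E_j$ is a direct summand complementary to $E^{cs,j-1}$. Hence $\widetilde W^{cs,j}(x)$ is foliated by $(s+j-1)$-dimensional plaques, with $\gamma_j(x)$ as a one-dimensional transversal. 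In particular, every $y\in B_\delta(x)\cap \widetilde W^{cs,j}(x)$ belongs to a unique plaque $\widehat W^{cs,j-1}_x(z_0,r)$ with $z_0\in \gamma_j(x)$, and since the leaf through a point is uniquely determined, $\widehat W^{cs,j-1}_x(y)=\widehat W^{cs,j-1}_x(z_0)$. Thus $z_0\in \gamma_j(x)\cap \widehat W^{cs,j-1}_x(y)$.

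The next step is to argue that this intersection reduces to $\{z_0\}$. The dominated splitting $E^{cs,j-1}\oplus E_j\oplus E^{cu,j+1}$ on $\Lambda$ extends by continuity to $\Lambda_V\subset V(\Lambda)$, so the angle between $E_j$ and $E^{cs,j-1}$ is bounded below uniformly by some $\theta_0>0$. Choosing the cone parameter $\epsilon$ in Proposition~\ref{p.fake} much smaller than $\theta_0$, the tangent space of $\widehat W^{cs,j-1}_x(y)$, which lies in an $\epsilon$-cone about $\widetilde E^{cs,j-1}$, remains uniformly transverse to $E_j$, hence to $T\gamma_j(x)$. For $r$ and $\delta$ small enough this transversality forces $\gamma_j(x)\cap \widehat W^{cs,j-1}_x(y)=\{z_0\}$.

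Finally, I would bound $d(x,z_0)$ via a Lipschitz holonomy. Let $\pi:\widetilde W^{cs,j}(x)\to \gamma_j(x)$ send each point to the unique element of $\gamma_j(x)$ lying on its plaque; then $\pi(x)=x$ and $\pi(y)=z_0$. The uniform lower bound on the transversal angle between $\gamma_j(x)$ and the $\widehat W^{cs,j-1}_x$-plaques makes $\pi$ Lipschitz with some constant $\kappa_j$ independent of $x\in\Lambda$. Setting $\kappa:=\max_{1\le j\le k}\kappa_j$ yields
\[
d(x,z_0)=d(\pi(x),\pi(y))\le \kappa_j\,d(x,y)\le \kappa\,\delta,
\]
which is precisely $\gamma_j(x)\cap \widehat W^{cs,j-1}_x(y)\subset B_{\kappa\delta}(x)$. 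The one genuinely delicate point, as in the sister Lemma~\ref{l.fkanglesa}, is the uniformity of $\kappa$ across $x\in\Lambda$ and $j\in\{1,\dots,k\}$: I must choose the cone parameter $\epsilon$ in Proposition~\ref{p.fake} (and, correspondingly, the sizes $r$ and $\delta$) small enough in terms of the continuous angle functions of the original dominated splitting on the compact set $\Lambda$, which is where the uniform transversality, and hence the Lipschitz bound, ultimately comes from.
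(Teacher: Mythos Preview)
Your proposal is correct and is essentially an unpacking of the paper's own argument: the paper gives no detailed proof of this lemma, stating only that it (together with Lemma~\ref{l.fkanglesa}) ``follows straightforwardly from the fact that the angles between unitary vectors in the cone fields $\cC(E^{cs,j})$ and $\cC(E^{cu,j+1})$ are uniformly bounded away from zero.'' Your Lipschitz-holonomy argument is precisely the standard way to convert that uniform angle bound into the required distance estimate, and your care about uniformity in $x$ and $j$ (via compactness of $\Lambda$ and a single choice of cone width $\epsilon$) is exactly the point that makes $\kappa$ global.
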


The next lemma is similar to the above, but is concerned with what happens inside a submanifold tangent to a conefield.

\begin{lem}\label{l.3}
There is $\widehat \kappa>0$ such 
that if 
 $y\in \Gamma_{\delta}(x) \cap \widehat W^{cs,j}_x(x)$ then if
$\delta>0$ is small enough then 
$\overline y_j,\widetilde y_j\in 
\Gamma_{\widehat \kappa \,\delta}(x)$.
\end{lem}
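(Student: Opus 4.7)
My plan is to prove the lemma by combining the local invariance of the fake foliations with the angle bounds from Lemmas~\ref{l.fkanglesa} and~\ref{l.fkanglesb}. The key observation is that the construction defining $\widetilde y_j$ and $\overline y_j$ is equivariant under iteration by $f$, so that at every time $n$ the images $f^n(\widetilde y_j)$ and $f^n(\overline y_j)$ arise from the same construction applied to $(f^n(x), f^n(y))$ based at $f^n(x)$.

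First I would verify this equivariance. Since $\gamma_j(x)$ is the integral curve of the $Df$-invariant one-dimensional bundle $E_j$ through $x$, we have $f^n(\gamma_j(x)) = \gamma_j(f^n(x))$ (up to truncation of the radius). By the local invariance of the fake foliations (Proposition~\ref{p.fake}(iii)), $f^n$ sends small pieces of $\widehat W^{cs,j-1}_x(z)$ and $\widehat W^{cu,j+1}_x(y)$ into $\widehat W^{cs,j-1}_{f^n(x)}(f^n(z))$ and $\widehat W^{cu,j+1}_{f^n(x)}(f^n(y))$, respectively. Putting these two facts together gives $f^n(\widetilde W^{cs,j}(x)) \subset \widetilde W^{cs,j}(f^n(x))$. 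From the defining relation~\eqref{e.pqp} it then follows that
\[
f^n(\widetilde y_j) \in \widehat W^{cu,j+1}_{f^n(x)}(f^n(y)) \cap \widetilde W^{cs,j}(f^n(x))
\]
and
\[
f^n(\overline y_j) \in \gamma_j(f^n(x)) \cap \widehat W^{cs,j-1}_{f^n(x)}(f^n(\widetilde y_j)).
\]

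Now since $y\in \Gamma_\delta(x)$, we have $f^n(y)\in B_\delta(f^n(x))$ for every $n\in\mathbb{Z}$. Applying Lemma~\ref{l.fkanglesa} at the basepoint $f^n(x)$ with $N=\widetilde W^{cs,j}(f^n(x))$ and $M=\widehat W^{cu,j+1}_{f^n(x)}(f^n(y))$ yields $f^n(\widetilde y_j)\in B_{\kappa\delta}(f^n(x))$. Next, Lemma~\ref{l.fkanglesb} at the basepoint $f^n(x)$ applied to the point $f^n(\widetilde y_j)\in B_{\kappa\delta}(f^n(x))\cap \widetilde W^{cs,j}(f^n(x))$ gives $f^n(\overline y_j)\in B_{\kappa^2\delta}(f^n(x))$. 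Setting $\widehat\kappa := \kappa^2$ gives the desired conclusion.

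The main obstacle is that the local invariance in Proposition~\ref{p.fake}(iii) is only valid for pieces of leaves of radius at most $r_1$. Iterating requires maintaining at each step that $f^n(\widetilde y_j)$ and $f^n(\overline y_j)$ remain within $r_1$ of $f^n(x)$, so that the next application of local invariance is legitimate. I would handle this by choosing $\delta$ small enough that $\widehat\kappa\,\delta<r_1$ and then performing an induction on $n$ (treating positive and negative times separately): the angle bounds at step $n$ keep both auxiliary points within $r_1$ of $f^n(x)$, which in turn allows the invariance step at time $n+1$ to close the induction.
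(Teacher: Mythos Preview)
Your proposal is correct and follows essentially the same route as the paper's proof. The paper also proceeds by an induction on the iterate (stated there as a separate claim) showing that $f^i(\widetilde y_j)$ and $f^i(\overline y_j)$ stay within $r_1$ of $f^i(x)$ and lie on the appropriate fake leaves and central curve at the iterated basepoint, and then invokes Lemmas~\ref{l.fkanglesa} and~\ref{l.fkanglesb} at each step to control the distances; your recognition of the $r_1$ obstacle and the inductive remedy matches this exactly, and your constant $\widehat\kappa=\kappa^2$ differs from the paper's $2(\kappa+1)^2$ only in harmless bookkeeping.
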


\begin{proof}
For simplicity let us omit the subscrit $j$ and just write
$\widetilde y$ and $\overline y$.
Take $r_1$ as in Proposition~\ref{p.fake}.

\begin{cla}\label{c.sacocheio} 
If  $\delta>0$ is small enough then 
for all $i\ge 0$ it holds
\[
f^i(\widetilde y) \in \widehat W^{cu,j+1}_{f^i(x)} (f^i(y), r_1)\quad
\mbox{and}
\quad
f^i(\widetilde y) \in \widehat W^{cs,j-1}_{f^i(x)} (f^i(\overline y), r_1)
\]
and $f^i(\overline y)$ is in a central curve $\gamma_j(x,r_1)$ 
centered at $x$ of radious $r_1$ 
tangent to $E_j$ and
containing $x$.
\end{cla}

\begin{proof}
The proof goes by induction on $i$. 
For $i=0$, by 
Lemma~\ref{l.fkanglesa}, 
we have that
$\dist (y, \widetilde y)< \kappa\, \delta< r_1$ and thus
$\dist (\widetilde y,x)< (\kappa +1)\, \varepsilon$.
Hence, for small $\delta$, Lemma~\ref{l.fkanglesb}
implies that $\dist (\widetilde y, \overline y)< \kappa\, (\kappa+1) \, \delta< r_1$. 
By construction the point $\overline y$ is in a central curve 
$\gamma_j(x,r_1)$.
This ends the first inductive step.

Assume that the induction hypothesis holds for all $i=0, \dots, m$. 
By Proposition~\ref{p.fake},
this implies
\[
\begin{split}
f^{m+1} (\widetilde y) &\in 
f \big( \widehat W^{cs,j-1}_{f^m(x)} (f^m(\overline y),r_1)\big)
\subset 
\widehat W^{cs,j-1}_{f^{m+1}(x)} (f^{m+1}(\overline y)), \quad \mbox{and}\\
f^{m+1} (\widetilde y) &\in 
f \big( \widehat W^{cu,j+1}_{f^m(x)} (f^m(y),r_1)\big)
\subset 
\widehat W^{cu,j+1}_{f^{m+1}(x)} (f^{m+1}(y)).
\end{split}
\]
As $f^m(\overline y)\in \gamma_j(f^m(x),r_1)$ we get that
$f^{m+1}(\overline y)\in \gamma_j(f^{m+1}(x))$ and thus,
recalling the definition of 
$\widetilde W^{cs,j}(f^{m+1}(x))$ in \eqref{e.ffake},  
$$
f^{m+1} (\overline y)= \widehat W^{cu,j+1}_{f^{m+1}(x)} (f^{m+1}(y))
\cap \widetilde W^{cs,j}(f^{m+1}(x)).
$$
As $\dist (f^{m+1}(y), f^m(x))<\delta$, we can apply the arguments in 
the step $i=0$ to obtain the claim.
\end{proof}

By Lemma~\ref{l.fkanglesa}, we have
$\dist (f^{m+1}(y), f^{m+1}(\widetilde y))< \kappa\, \delta$ and
hence $\dist (f^{m+1}(x), f^{m+1}(\widetilde y))< (\kappa +1) \, \delta$.
Lemma~\ref{l.fkanglesb} now implies that
$\dist (f^{m+1}(\overline y), f^{m+1}(\widetilde y))< \kappa\, 
(\kappa+1)\,
\delta$. Hence 
$$
\dist (f^{m+1}(x), f^{m+1}(\overline y))< 
2\, (\kappa +1)^2 \, \delta.
$$
Taking $\widehat \kappa= 2\, (\kappa+1)^2$ we 
end the proof of Lemma~\ref{l.3}.
\end{proof}

\begin{lem} \label{l.nense}
For every   $\delta>0$ small such that
there is $y\in \Gamma_\delta(x)\setminus \{x\}$
there is $j_0\in \{1,\dots,k\}$ 
such that 
$$
\gamma_{j_0}(x) \cap \big( \Gamma_{\kappa'\, \delta}(x)\setminus
\{x\} \big)\ne \emptyset
, \quad \mbox{where  $\kappa'=\widehat \kappa^{\,k}$.}
$$ 
\end{lem}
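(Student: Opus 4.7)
The plan is to apply Claim~\ref{c.1} and Lemma~\ref{l.3} repeatedly in a downward induction on the index $j$, starting from $j=k$. The starting point is provided by Lemma~\ref{l.1}, which guarantees that any $y\in\Gamma_\delta(x)\setminus\{x\}$ lies in $\widehat W^{cs,k}_x(x)$.

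First I would set $p_k:=y$, so that $p_k\in \widehat W^{cs,k}_x(x)\cap\Gamma_\delta(x)\setminus\{x\}$. The inductive hypothesis at step $j\in\{1,\dots,k\}$ is that we already have a point $p_j\in \widehat W^{cs,j}_x(x)\cap \Gamma_{\widehat\kappa^{\,k-j}\delta}(x)\setminus\{x\}$. Applying the construction in \eqref{e.pqp} to $p_j$ (in place of $y$) produces the two accompanying points $\overline p_j\in\gamma_j(x)$ and $\widetilde p_j\in\widetilde W^{cs,j}(x)$. Lemma~\ref{l.3}, applied with $\delta'=\widehat\kappa^{\,k-j}\delta$, places both $\overline p_j$ and $\widetilde p_j$ in $\Gamma_{\widehat\kappa^{\,k-j+1}\delta}(x)\subset \Gamma_{\widehat\kappa^{\,k}\delta}(x)=\Gamma_{\kappa'\delta}(x)$.

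Claim~\ref{c.1} now leaves two possibilities. In case (a), $\overline p_j\ne x$, and the induction halts: we take $j_0:=j$, since $\overline p_j\in \gamma_j(x)\cap \Gamma_{\kappa'\delta}(x)\setminus\{x\}$. In case (b), $\overline p_j=x$ and $\widetilde p_j\ne x$; by the definition in \eqref{e.pqp} this forces $\widetilde p_j\in \widehat W^{cs,j-1}_x(\overline p_j)=\widehat W^{cs,j-1}_x(x)$, and I would set $p_{j-1}:=\widetilde p_j$ and continue at level $j-1$. Because the inductive bound on the $\Gamma$-radius multiplies by $\widehat\kappa$ at each step, after at most $k$ applications the enclosing set is still $\Gamma_{\widehat\kappa^{\,k}\delta}(x)=\Gamma_{\kappa'\delta}(x)$, so the growing constants never exceed the allowed bound $\kappa'\delta$.

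The main obstacle is ruling out the degenerate scenario in which case (b) occurs at every step, so that the induction reaches $j=0$. In that case we would obtain a point $p_0\in \widehat W^{cs,0}_x(x)\setminus\{x\}$ that still lies in $\Gamma_{\kappa'\delta}(x)$. By Proposition~\ref{p.fake}(iv) we have $\widehat W^{cs,0}_x(x)=W^s(x,r)$, so $p_0$ would be a nontrivial point of the local strong stable manifold of $x$. However, since $E^s$ is uniformly contracting for $f$ — equivalently, uniformly expanding for $f^{-1}$ — the distance $d(f^{-n}(p_0),f^{-n}(x))$ grows geometrically in $n$ as long as the iterates remain near $x$'s orbit, and thus eventually exceeds $\kappa'\delta$, contradicting $p_0\in \Gamma_{\kappa'\delta}(x)$ once $\delta$ is chosen small enough. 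Hence the induction must terminate in case (a) at some $j_0\in\{1,\dots,k\}$, producing the desired point $\overline p_{j_0}\in \gamma_{j_0}(x)\cap(\Gamma_{\kappa'\delta}(x)\setminus\{x\})$.
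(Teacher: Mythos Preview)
Your proposal is correct and follows essentially the same route as the paper: the paper packages the dichotomy you state as a separate Claim (either $\overline y_j\ne x$ gives a nontrivial point on $\gamma_j(x)$, or $\widetilde y_j\ne x$ drops you to $\widehat W^{cs,j-1}_x(x)$), then runs the same downward induction from $j=k$ using Lemma~\ref{l.1}, Lemma~\ref{l.3}, and Claim~\ref{c.1}, reaching the contradiction $y_0\in W^s(x)\setminus\{x\}$. Your final paragraph in fact spells out more carefully than the paper why $p_0\in W^s(x,r)\cap\Gamma_{\kappa'\delta}(x)\setminus\{x\}$ is impossible.
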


\begin{proof}
The next claim is needed in the proof of the lemma.
\begin{cla}\label{c.mengo}
Let $j\in \{1, \dots, k\}$ and
$y\in \widehat W^{cs,j}_x(x) \cap \Gamma_{\delta}(x)$, $y\ne x$.
Then there are two possibilities:
\begin{enumerate}
 \item[a)] either $\gamma_j(x)$ contains at least two points
of $\Gamma_{\widehat \kappa \, \delta}(x)$,
\item[b)] or there is $\widehat y \in
\widehat W^{cs,j-1}_x(x) \cap \Gamma_{\widehat \kappa\, \delta}(x)$.
\end{enumerate}
\end{cla}

\begin{proof}
If  $y \in  \gamma_j(x)$ we are done. 
Otherwise consider the points $\overline y_j$ and
$\widetilde y_j$ defined in equation \eqref{e.pqp}.
If $\overline y_j\ne x$, by Lemma~\ref{l.3}, 
$\overline  y_j\in \gamma_j(x) \cap \Gamma_{\widehat \kappa\, \delta}(x)$ and
we are also done.
Otherwise,
$\overline y_j= x$ and
 we take the point $\widetilde y_j\in \big( 
\Gamma_{\widehat \kappa \, \delta}(x)
\setminus \{x\}\big)$, recall Claim~\ref{c.1}. This point belongs to
$\widehat W^{cs,j-1}_x(\overline y_j)=
\widehat W^{cs,j-1}_x(x)$. Taking $\widehat y=\widetilde y_j$ one proves the claim.
\end{proof}

We are now ready to end the proof of Lemma~\ref{l.nense}.
%Assume by contradiction that 
%$\gamma_j(x)\cap \Gamma_{\kappa'\,\delta}(x)=\{x\}$
%for all $j\in\{1,\dots,k\}$.
By Lemma~\ref{l.1} we have $y\in \widehat W^{cs,k}_x(x)$.
Let $y\eqdef y_k$. Recursively, using the notation in
Claim~\ref{c.mengo}, for $j=1,\dots, k$, we define the points
 $y_{j-1}\eqdef \widetilde{y_j}$ where
\begin{equation}\label{e.docaral}
 y_{j-1}= \widetilde{y_j}\in 
\widehat W^{cs,j-1}_x(x)
\cap
\Gamma_{\widehat \kappa^{k-j}\, \delta}(x), \quad y_{j-1}\ne x.
\end{equation}
Since we are assuming that $\gamma_j(x)=\{x\}$ for all $j=1,\dots,k$,
by Claim~\ref{c.mengo}, the points $y_j$ are well defined. 
By construction, we have $y_0\in W^s(x)$, which is a contradiction.
This proves the lemma.
\end{proof}

\begin{sch}\label{sc.elon}{\em{
The arguments in the proof of Lemma~\ref{l.nense} implies the following.
Assume that
$\gamma_j(x)=\{x\}$ for $j=\iota+1, \dots, k$.
Then to any point $y\in \Gamma_{\delta}(x)$
we can  associate points
$y_k=y,\, y_{k-1},\dots y_{\iota+1}\in 
\big(\Gamma_{\kappa'\, \delta}(x)\setminus
\{x\}\big)$ such that
$$
y_{j}\in 
\widehat W^{cs,j}_x(x)
\cap
\Gamma_{\kappa'\, \delta}(x).
$$ 
}}
\end{sch}

\medskip

\noindent{{\emph{End of the proof of Proposition~\ref{p.bowenballs}.} 
In view of Lemma~\ref{l.nense},  there is a largest $j\in\{1,\dots,k\}$,
that we denote by $\iota$, such that 
$\gamma_\iota(x)\cap \Gamma_{\kappa'\,\delta} (x)$ contains 
at least two points.
Then $\gamma_\iota (x)\cap \Gamma_{\kappa'\,\delta} (x)=[a,b]$,
with $a\ne b$.

Since $\gamma_j(x)=\{x\}$, 
 for $j=\iota+1, \dots, k$ we can consider the points
$y_k=y, \, y_{k-1},\dots y_{\iota+1}\in 
\big( \Gamma_{\kappa'\, \delta}(x)\setminus
\{x\} \big)$ satisfying
Scholium~\ref{sc.elon}.

For $z=y_{\iota+1}$  we let
$\widetilde z_{\iota+1},
\overline z_{\iota+1}\in \Gamma_{\widehat \kappa\,\kappa'\,\delta}
(x)$
as in Equation \eqref{e.pqp}.  
Define
\begin{equation}
\widetilde y^{\,\,*} =
\widetilde z_{\iota +1}, \quad 
 \overline y^{\,\,*} =
\overline z_{\iota +1}.
\end{equation}
By Claim~\ref{c.1} and by construction,
there are two possibilities:
\begin{enumerate}
 \item 
$\widetilde y^{\,\,*}\in W^{cs, \iota-1}_x 
( \overline y^{\,\,*})$,
where  
$\overline y^{\,\,*}\in [a,b]$ and $\widetilde y^{\,\,*}\ne
\overline y^{\,\,*}$, \, or
\item 
$y_{\iota +1} \in  W^{cu, \iota +1}_x 
( \overline y^{\,\,*})$ and 
$\overline y^{\,\,*}\in [a,b]$.
\end{enumerate}
Note that  $\overline y^{\,\,*}\ne x$.
As $[a,b]$ is non-trivial and
$\overline y^{\,\,*} \in [a,b] \subset \Gamma_{\widehat \kappa \, 
\kappa'\, \delta}(x)$,
Corollary~\ref{c.todd} implies that, if case (1) holds then
$\widetilde y^{\,\,*}\notin \Gamma_{\widehat \kappa \, 
\kappa' \, \delta}(x)$.  Similarly, if  case (2) holds,
$y_{\iota +1}\not\in 
\Gamma_{\widehat \kappa \,  \kappa'
\, \delta}(x)$. In both cases we get a contradiction, ending the proof of the proposition. \hfill \quad

\end{document}